\newtheorem{theorem}{Theorem}[section]
\newtheorem{lemma}[theorem]{Lemma}
\newtheorem{corollary}[theorem]{Corollary}
\newtheorem{proposition}[theorem]{Proposition}
\theoremstyle{definition}
\newtheorem{definition}[theorem]{Definition}
\theoremstyle{remark}
\newtheorem{remark}[theorem]{Remark}
\numberwithin{equation}{section}
\newcommand{\bbR}{\mathbb{R}}
\newcommand{\bbS}{\mathbb{S}}
\newcommand{\bbZ}{\mathbb{Z}}
\newcommand{\bsc}{\boldsymbol{c}}
\newcommand{\bsk}{\boldsymbol{k}}
\newcommand{\bsell}{\boldsymbol{\ell}}
\newcommand{\bsu}{\boldsymbol{u}}
\newcommand{\bsv}{\boldsymbol{v}}
\newcommand{\bsx}{\boldsymbol{x}}
\newcommand{\bsy}{\boldsymbol{y}}
\newcommand{\bsz}{\boldsymbol{z}}
\newcommand{\bszero}{\boldsymbol{0}}
\newcommand{\Ccal}{\mathcal{C}}
\newcommand{\Scal}{\mathcal{S}}
\newcommand{\Xcal}{\mathcal{X}}
\DeclareMathOperator{\Area}{Area}
\DeclareMathOperator{\vol}{vol}
\DeclareMathOperator{\Vor}{Vor}
\begin{document}

\title{Constructive quasi-uniform sequences over triangles}


\author{Hengjun Xu}
\address{Graduate School of Engineering, The University of Tokyo,
7-3-1 Hongo, Bunkyo-ku, Tokyo 113-8656, Japan}
\curraddr{}
\email{jokokun@g.ecc.u-tokyo.ac.jp}
\thanks{}

\author{Takashi Goda}
\address{Graduate School of Engineering, The University of Tokyo,
7-3-1 Hongo, Bunkyo-ku, Tokyo 113-8656, Japan}
\curraddr{}
\email{goda@frcer.t.u-tokyo.ac.jp}
\thanks{The work of T.G.\ is supported by JSPS KAKENHI Grant Number 23K03210.}

\subjclass[2020]{Primary 52C17; Secondary 11K38, 65D05, 65D12, 65D15}
\keywords{quasi-uniformity, mesh ratio, Voronoi diagram, greedy packing algorithm, low discrepancy sequences, radial basis function interpolation}

\date{}

\dedicatory{}

\begin{abstract}
In this paper, we develop constructive algorithms for generating quasi-uniform point sets and sequences over arbitrary two-dimensional triangular domains. Our proposed method, called the \emph{Voronoi-guided greedy packing} algorithm, iteratively selects the point farthest from the current set among a finite candidate set determined by the Voronoi diagram of the triangle. Our main theoretical result shows that, after a finite number of iterations, the mesh ratio of the generated point set is at most~2, which is known to be optimal. We further analyze two existing triangular low-discrepancy point sets and prove that their mesh ratios are uniformly bounded, thereby establishing their quasi-uniformity. Finally, through a series of numerical experiments, we demonstrate that the proposed method provides an efficient and practical strategy for generating high-quality point sets on individual triangles.
\end{abstract}

\maketitle

\sloppy

\section{Introduction}

In computational mathematics, numerical analysis, experimental design, and various engineering applications, the accuracy and efficiency of fundamental tasks such as interpolation, scattered data approximation, and physical simulation depend critically on how well the computational domain is sampled. A common approach is to distribute a carefully designed point set over the domain. To evaluate the quality of such point sets---particularly in the context of scattered data approximation---the concept of \emph{quasi-uniformity} has been central \cite{SW06,W05}. This notion is described in terms of two complementary geometric quantities: the \emph{separation radius} and the \emph{covering radius}. The separation radius is defined as half the minimum distance between any two points, whereas the covering radius is the maximum distance from any location in the domain to its nearest point. In this paper, we employ the Euclidean distance (i.e., $\ell_2$ norm); however, the notion of quasi-uniformity remains consistent for any distance metric, such as the $\ell_p$ norm with $1\le p \le \infty$, that is equivalent to the Euclidean distance.

A large separation radius prevents clustering by promoting well-separated point configurations, while a small covering radius limits the largest gaps and ensures adequate coverage of the entire domain. The \emph{mesh ratio}, defined as the covering radius divided by the separation radius, provides a unified measure of these two properties. In other words, a point set with a small mesh ratio simultaneously achieves a large separation radius and a small covering radius, and is called \emph{quasi-uniform}. A precise definition will be provided in Section~\ref{sec:preliminary}.

The importance of quasi-uniformity is directly reflected in its substantial impact on the performance of numerical algorithms. From a numerical viewpoint, the above two geometric quantities are closely tied to error control and numerical stability. The covering radius governs theoretical upper bounds on interpolation and approximation errors, whereas the separation radius affects the condition number and stability of the associated linear systems, such as interpolation matrices (see, e.g., \cite{DS10,S95,W05,WSH21}). Consequently, a quasi-uniform point set or sequence generally ensures robust performance across a wide range of applications, including radial basis function (RBF) interpolation \cite{LF03,S95}, meshfree methods \cite{SW06}, and the design of computer experiments \cite{FLS06,SWN03}. In the latter area, the \emph{maximin} and \emph{minimax} distance criteria \cite{JMY90} correspond directly to maximizing the separation radius and minimizing the covering radius, respectively; see also \cite{J16,PM12}.

In addition to quasi-uniformity, another classical but important measure of uniformity is \emph{discrepancy}. Originating from uniform distribution theory, discrepancy quantifies the deviation between the empirical distribution of a point set and the ideal uniform distribution, and plays a central role in areas such as high-dimensional numerical integration \cite{KN74,N92}. In quasi-Monte Carlo methods, low-discrepancy sequences achieve a deterministic integration error rate of $O((\log N)^d/N)$, substantially outperforming traditional Monte Carlo methods even in high dimensions.

However, these two notions of uniformity (i.e., quasi-uniformity and discrepancy) are not mutually inclusive. In contrast to quasi-uniformity, which under the Euclidean metric is defined in terms of geometric distances and is thus invariant under rigid rotations, discrepancy is typically defined with respect to a fixed family of test sets (such as axis-parallel boxes) and is generally not rotation invariant. Moreover, a point set can exhibit low discrepancy yet have a very small separation radius (i.e., a large mesh ratio), and conversely, a point set with a small mesh ratio does not necessarily have low discrepancy. Therefore, when a point set that has low discrepancy but poor separation is used as sampling nodes for kernel interpolation, it can severely degrade numerical stability. Indeed, recent studies have revealed that the relationship between these two measures is highly nontrivial. For instance, one-dimensional Kronecker sequences $i\alpha \pmod 1$, $i = 0, 1, 2, \ldots$, are quasi-uniform if and only if $\alpha$ is badly approximable \cite{G24a}, which is a stronger condition than that required for them to be low-discrepancy. As another example, the well-known Sobol' sequence, while low-discrepancy, is not quasi-uniform, at least in dimension $2$ \cite{G24b}. Further nontrivial examples are presented in \cite{DGS25}, and some positive results have been obtained for the multi-dimensional unit cube \cite{DGLPS25}, where lattice point sets and multi-dimensional Kronecker sequences are analyzed in terms of their quasi-uniformity properties. 

Despite extensive development of these theoretical and algorithmic frameworks, the construction of point sets and sequences with small or bounded mesh ratios has so far been studied mostly for highly symmetric domains, such as the unit hypercube $[0,1]^d$ \cite{DGLPS25,DGS25,JMY90,J16} or the two-dimensional sphere $\bbS^2$ \cite{HMS16}. 
Recently, Pronzato and Zhigljavsky \cite{PZ23} advanced the field by proving that, for any infinite sequence of points in arbitrary compact domains of $\bbR^d$, the mesh ratio cannot be uniformly bounded by a constant smaller than $2$, and showed that a simple greedy packing construction can provably achieve this optimal bound of $2$. However, each step of the greedy packing procedure requires identifying the point that attains the covering radius, which is computationally challenging in practice. Except for highly symmetric domains such as the unit hypercube $[0,1]^d$, the resulting sequence cannot be explicitly constructed in general. The idea of the greedy packing, i.e., sequentially selecting the point farthest from the current point set, has appeared in earlier works; see, e.g., \cite{ELPZ97} for application to progressive image sampling. 

Regarding the optimal mesh ratio of $2$, for comparison, several classical planar lattices have explicitly computable mesh ratios: the equilateral triangular lattice achieves $2/\sqrt{3}$, while the square lattice achieves $\sqrt{2}$. By contrast, for a single regular hexagon used as the sampling region, with sample points placed at its six vertices, the mesh ratio equals $2$. This finite-domain example is consistent with the universal threshold $2$ for extensible infinite sequences as proven in \cite{PZ23}, although the two settings are different. For further details on lattice configurations and their mesh ratios (referred to as the packing-covering constant therein) in higher dimensions, we refer the reader to, e.g., \cite{SV06}.

In this paper, we focus on triangular domains and study the construction of point sets and sequences with small or bounded mesh ratios. Our primary interest is in extensible constructions, namely in sequences whose initial segment of $n$ points has a controlled mesh ratio for all $n\ge 2$, rather than in the one-shot generation of a single $n$-point set. For such sequences, the aforementioned argument applies: the mesh ratio bound of $2$ is the best possible result that can be achieved. The triangle is a fundamental building block in computational science and engineering, appearing ubiquitously in areas such as mesh generation in computer graphics \cite{KCS98,SZL92}, finite element analysis \cite{BA76,C78}, and earth and space sciences \cite{SGFKKT07}. An arbitrary shaped triangle exhibits features such as asymmetry and boundary anisotropy, which make the construction and analysis of high-quality point sets more challenging than in highly symmetric domains. A canonical example arises in computational fluid dynamics, where ``skinny'' triangles with high aspect ratios are essential to resolve sharp gradients in boundary layers \cite{Ape99}, making accurate interpolation over such anisotropic elements crucial. To the best of the authors’ knowledge, the design of point sets with provably small mesh ratio bounds on arbitrary triangles, as well as the performance evaluation of low-discrepancy point sets for practical tasks like interpolation in this setting, has not yet been fully addressed in the literature.

To fill this research gap, this paper makes the following contributions:
\begin{enumerate}
    \item We propose a constructive algorithm for generating quasi-uniform point sets and sequences over arbitrary two-dimensional triangular domains, named the \emph{Voronoi-guided greedy packing} (VG) algorithm, inspired by the simple greedy packing \cite{PZ23}. We provide a theoretical analysis showing that, after finitely many iterations, the mesh ratio of the generated point sets is provably bounded by the optimal constant of $2$. 
    \item We analyze existing triangular low-discrepancy point sets proposed in \cite{BO15}, proving that their mesh ratios are uniformly bounded and thereby establishing their quasi-uniformity.
    \item We perform extensive numerical experiments to demonstrate the geometric properties of the VG algorithm and benchmark its performance against low-discrepancy point sets, random point sets, and barycentric grids in standard RBF interpolation.
\end{enumerate}
In passing, one of the triangular low-discrepancy point sets proposed in \cite{BO15} has been further extended in \cite{DHHL24,GSY17} for applications in numerical integration. 
It is also worth mentioning the related problem of generating low-discrepancy sequences on simplices, which has been studied primarily in the context of (quasi-)Monte Carlo integration. While the literature on the unit hypercube is vast, point generation on simplices often relies on mapping techniques from the hypercube (see, e.g., the monograph  \cite{FW94}, and later developments such as \cite{PC04,PC05}). Theoretical frameworks, such as the Koksma-Hlawka inequality, have also been extended to simplices \cite{BCGT13}. However, we emphasize again that low discrepancy and quasi-uniformity do not necessarily imply one another. The above second contribution implies that these two properties can coexist for the triangular low-discrepancy point sets by \cite{BO15}.

The rest of this paper is organized as follows. 
In Section~\ref{sec:preliminary}, we formally define quasi-uniformity for both infinite sequences of points and sequences of point sets with increasing size, and introduce the Voronoi diagram together with its relevant properties.
Section~\ref{sec:construction} presents the VG algorithm, a novel constructive method for generating quasi-uniform sequences over arbitrarily shaped triangles, and proves that after a finite number of iterations, the mesh ratio of the constructed point set is at most~2, which is the best possible. In the same section, we also analyze the quasi-uniformity of the existing triangular low-discrepancy point sets proposed in~\cite{BO15}.
Finally, Section~\ref{sec:experiments} provides numerical experiments, where we examine the geometric properties of the VG algorithm and compare its performance with other point sets in standard RBF interpolation problems.

\section{Preliminaries}\label{sec:preliminary}

\subsection{Quasi-uniform sequence}\label{subsec:def_qu}
Let $\Omega$ be a compact subset of $\mathbb{R}^d$ for some $ d\ge 1$, with $\vol(\Omega)\ge 0.$ We denote the norm by $\|\cdot\|$, which refers to the Euclidean norm throughout this paper. For a point set $P \subset \Omega$, the covering radius is defined as
\[  h(P; \Omega) := \max_{\bsx\in \Omega} \min_{\bsy \in P} \|\bsx-\bsy\|, \]
and the separation radius as
\[  q(P; \Omega) := \frac{1}{2}\min_{\substack{\bsx,\bsy\in P\\ \bsx\neq \bsy}} \| \bsx-\bsy\|. \]
The mesh ratio, also called the uniformity constant, is then given by
\[  \rho(P; \Omega):= \frac{h(P; \Omega)}{q(P; \Omega)}.\]
Note that at least two points are required for the separation radius to be well-defined.\footnote{The notations $h,q$, and $\rho$ for the covering radius, separation radius, and mesh ratio have been widely used in the relevant literature \cite{DS10,PZ23,S95,SW06,W05,WSH21}. Readers unfamiliar with this convention can keep in mind that the covering radius $h$ can be intuitively understood as the radius of the largest empty \emph{hole} in $\Omega$ (as we also see later in Lemma~\ref{lem:cv_radius}). Then, $q$ is reserved for the separation radius, and $\rho$ is simply the ratio.}
If $\Omega$ is connected, we always have $h(P; \Omega) \ge q(P; \Omega)$, so that $\rho(P; \Omega) \ge 1$. On the other hand, if two points in $P$ are very close to each other, the mesh ratio can become arbitrarily large.

Consider placing Euclidean balls of equal radius centered at each point in $P$. The covering radius is the minimal radius for which the union of the closed balls covers the entire domain $\Omega$, whereas the separation radius is the maximal radius such that none of the open balls overlap. From this geometric perspective, it can be shown that, for any $n$-element point set $P$, there exist constants $C_1,C_2>0$, depending only on $\Omega$, such that
\[ h(P; \Omega)\ge C_1 n^{-1/d} \quad\text{and}\quad q(P; \Omega)\le C_2 n^{-1/d}. \]
Hence, in order for $P$ to have a small mesh ratio, both the covering and separation radii should be of the same (optimal) order $n^{-1/d}$. 

Now we define quasi-uniformity for an infinite sequence of points.
\begin{definition}[quasi-uniform infinite sequence]
    Let $\Scal=(\bsx_i)_{i\ge 1}$ be an infinite sequence of points in $\Omega$. For each $n\ge 1$, denote by $P_n$ the first $n$ points of $\Scal$, i.e., $P_n=(\bsx_i)_{1\le i\le n}$. The sequence $\Scal$ is called \emph{quasi-uniform} over $\Omega$ if there exists a constant $C>0$ such that $\rho(P_n; \Omega)\le C$ for all $n\ge 2$.
\end{definition}

It was shown in \cite{DGLPS25} that, if there exists a subsequence $1<n_1< n_2 < n_3 < \cdots$ satisfying $n_{i+1}\le cn_i$ for some constant $c>1$ and $\rho(P_{n_i}; \Omega)\le C$ for all $i$, then the sequence $\Scal$ is still quasi-uniform.
Motivated by this observation, we extend the notion of quasi-uniformity to sequences of point sets that are not necessarily nested or extensible.
\begin{definition}[quasi-uniform sequence of point sets]\label{def:quasi-uniform}
    Let $(P_i)_{i\ge 1}$ be a sequence of point sets in $\Omega$. Assume that the size of each point set, denoted by $|P_i|$, satisfies $1<|P_i|<|P_{i+1}|\le c|P_i|$ for some constant $c>1$. The sequence of point sets $(P_i)_{i=0,1,\ldots}$ is called \emph{quasi-uniform} over $\Omega$ if there exists a constant $C>0$ such that $\rho(P_i; \Omega)\le C$ for all $i\ge 1$.
\end{definition}

\subsection{Voronoi diagram}

In the rest of this paper, we focus on the case where the domain $\Omega \subset \bbR^2$ is a triangle, denoted by $T=\triangle ABC$ with vertices $A$, $B$, and $C$. We assume that the triangles considered in this paper are always non-degenerate and closed (hence compact) subsets of $\bbR^2$. In what follows, we denote by $\overline{B}(\bsx,r) \subset \bbR^2$ the closed Euclidean disk of radius $r$ centered at a point $\bsx$.

\begin{definition}[largest empty disk]
Let $T = \triangle ABC$ be a triangle, and let $P \subset T$ be a finite point set.  
For a point $\bsx \in T$ and radius $r > 0$, the \emph{empty radius} at $\bsx$ is defined as
\[
r^*(\bsx; P) := \sup \{\, r > 0 \mid \overline{B}(\bsx, r) \cap P = \emptyset \,\}.
\]
Furthermore, we define
\[
r_{\max}(P;T) := \max_{\bsx \in T} r^*(\bsx; P), 
\qquad 
\bsx_{\max}(P;T) := \underset{\bsx \in T}{\arg\max}\, r^*(\bsx; P).
\]
Then, the \emph{largest empty disk} with center in $T$ is given by
\[
\overline{B}_{\max} := \overline{B}(\bsx_{\max}(P;T), r_{\max}(P;T)).
\]
\end{definition}

Note that $\bsx_{\max}(P;T)$ may not be unique; in that case, any of the maximizers can be chosen as $\bsx_{\max}(P;T)$. The same applies to the largest empty disk $\overline{B}_{\max}$. Due to the compactness of $T$, such a disk always exists. With this notion, we have the following lemma.

\begin{lemma}\label{lem:cv_radius}
Let $T = \triangle ABC$ be a triangle, and let $P \subset T$ be a finite point set. Then, it holds that $h(P; T) = r_{\max}(P;T).$
\end{lemma}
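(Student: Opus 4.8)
The plan is to show that the pointwise empty radius coincides with the distance from $\bsx$ to the nearest point of $P$, and then to combine this identity with a compactness argument in order to identify the two global quantities $r_{\max}(P;T)$ and $h(P;T)$.

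First I would fix an arbitrary $\bsx \in T$ and prove that $r^*(\bsx; P; T) = \min_{\bsy \in P} \|\bsx - \bsy\|$. Writing $\delta(\bsx) := \min_{\bsy \in P} \|\bsx - \bsy\|$, which is well defined since $P$ is finite and nonempty, I observe that for any $r > 0$ the condition $\overline{B}(\bsx, r) \cap P = \emptyset$ is equivalent to $\|\bsx - \bsy\| > r$ for every $\bsy \in P$, i.e.\ to $r < \delta(\bsx)$. Hence the set over which the supremum in the definition of $r^*$ is taken equals $(0, \delta(\bsx))$ when $\bsx \notin P$, whose supremum is $\delta(\bsx)$; when $\bsx \in P$ this set is empty and both sides vanish under the usual convention $\sup \emptyset = 0$. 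In either case $r^*(\bsx; P; T) = \delta(\bsx)$.

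Next, taking the maximum over $\bsx \in T$ gives $r_{\max}(P;T) = \max_{\bsx \in T} \delta(\bsx)$, while by definition $h(P;T) = \sup_{\bsx \in T} \delta(\bsx)$. It then remains to check that this supremum is attained, so that the $\max$ and $\sup$ agree. The map $\bsx \mapsto \delta(\bsx)$ is continuous, being the pointwise minimum of the finitely many continuous functions $\bsx \mapsto \|\bsx - \bsy\|$ with $\bsy \in P$; since $T$ is compact, this continuous function attains its maximum on $T$. Therefore $\sup_{\bsx \in T} \delta(\bsx) = \max_{\bsx \in T} \delta(\bsx)$, and combining the two expressions yields $h(P;T) = r_{\max}(P;T)$.

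The argument is essentially a matter of unwinding the definitions, so I do not anticipate a serious obstacle. The only points requiring care are the edge case $\bsx \in P$, where the defining set of the supremum is empty, and the justification that the supremum in the definition of $h$ is genuinely attained, which is handled cleanly by the continuity of $\delta$ together with the compactness of $T$.
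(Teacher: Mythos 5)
Your proof is correct and follows essentially the same route as the paper: identify the pointwise empty radius $r^*(\bsx;P;T)$ with the nearest-point distance $\min_{\bsy\in P}\|\bsx-\bsy\|$ and then pass to the supremum over $\bsx\in T$. The extra care you take with the edge case $\bsx\in P$ and with the attainment of the supremum via continuity and compactness is a welcome (if minor) tightening of the paper's more terse argument.
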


\begin{proof}
For any $\bsx\in T$ we have $r^*(\bsx;P)=\min_{\bsy\in P}\|\bsx-\bsy\|$, since the largest radius of a disk centered at $\bsx$ that does not meet $P$ is precisely the distance from $\bsx$ to the nearest point of $P$. Taking the maximum over $\bsx\in T$ on both sides yields
\[
r_{\max}(P;T)=\max_{\bsx\in T}r^*(\bsx;P)=\max_{\bsx\in T}\min_{\bsy\in P}\|\bsx-\bsy\|=h(P;T),
\]
which proves the lemma.
\end{proof}

To explicitly locate the centers of the largest empty disks, we employ the Voronoi diagram of the point set $P\subset T$. We start from the standard definition of the Voronoi diagram for a planar compact domain $\Omega\subset \bbR^2$.

\begin{definition}[Voronoi diagram] 
Let $\Omega\subset \bbR^2$ be a planar compact domain, and let $P=(\bsx_i)_{1\le i\le n}$ be a set of $n$ points in $\Omega$. The Voronoi cell corresponding to the point $\bsx_i$ is defined as
\[
\Vor(\bsx_i; P, \Omega) = \{\bsy \in \Omega \mid \|\bsx_i-\bsy\| \le \|\bsx_j-\bsy\|,\, \text{for all $j\neq i$} \}.
\]
\end{definition}

\begin{remark}
    Here, the point $\bsx_i$ is called the \emph{site} of the cell $\Vor(\bsx_i; P,\Omega)$. For $i\neq j$, the intersection of neighboring cells,  $\Vor(\bsx_i; P,\Omega)\cap \Vor(\bsx_j; P,\Omega)$, forms a \emph{Voronoi edge}, on which $d(\bsx_i,\bsy)=d(\bsx_j,\bsy)$, i.e., the points are equidistant from the two sites and the edge is contained in the bisector of the segment connecting them. Moreover, a point $\bsy$ is called a \emph{Voronoi vertex} if it lies at the intersection of at least three Voronoi edges. Such vertices are precisely the interior candidates for the centers of the largest empty disks if $\Omega=T=\triangle ABC$; boundary candidates are treated separately in Lemma~2.8. Note that the union of the Voronoi cells covers the entire domain $\Omega$.
\end{remark}

\begin{remark}
Again, our interest in this paper is in the case $\Omega = \triangle ABC$. In this setting, we define the points where the Voronoi edges meet the sides of the triangle as \emph{intersections}, and we call the union of all Voronoi edges together with the edges of the triangle itself the skeleton, 
\[
G = \left(\bigcup_{i \ne j} \Vor(\bsx_i; P,\Omega) \cap \Vor(\bsx_j; P,\Omega) \right) \cup \partial T,
\]
which captures all the edges along which the centers of the largest empty disks may lie.
\end{remark}

With the above notion of the Voronoi diagram, we can explicitly identify the possible locations of the centers of the largest empty disks $\overline{B}_{\max}$.

\begin{lemma}\label{lem:circle_center}
Let $T = \triangle ABC$ be a triangle, and let $P \subset T$ be a finite point set. Then the center of the largest empty disk in $T$ must be one of the following: a Voronoi vertex, an intersection point of a Voronoi edge with the boundary of $T$, or a vertex of $T$.
\end{lemma}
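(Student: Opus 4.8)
The plan is to characterize any maximizer $\bsx^{*}:=\bsx_{\max}(P;T)$ via a first-order, local-perturbation optimality argument. As recorded in the proof of Lemma~\ref{lem:cv_radius}, the empty radius coincides with the distance to the nearest site, so $\bsx^{*}$ is a maximizer over the compact set $T$ of the function
\[
f(\bsx):=\min_{\bsy\in P}\|\bsx-\bsy\|.
\]
I would argue by contradiction: assuming $\bsx^{*}$ is none of the three listed types, I will exhibit an admissible direction along which $f$ strictly increases while remaining inside $T$, contradicting maximality. The natural bookkeeping device is the set of nearest sites $N(\bsx^{*}):=\{\bsy\in P:\|\bsx^{*}-\bsy\|=f(\bsx^{*})\}$, and I would split into cases according to $k:=|N(\bsx^{*})|$. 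Note that $f(\bsx^{*})>0$ since $P$ is finite, so $\bsx^{*}\notin P$ and the nearest-site distance is genuinely positive.

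When $k\ge 3$, the point $\bsx^{*}$ is equidistant from at least three sites and hence lies on at least three bisectors, so by definition it is a Voronoi vertex and we are done. When $k=1$, with unique nearest site $\bsy_{0}$, the function $f$ agrees with the smooth map $\bsx\mapsto\|\bsx-\bsy_{0}\|$ in a neighbourhood of $\bsx^{*}$, whose gradient is the nonzero unit vector pointing from $\bsy_{0}$ to $\bsx^{*}$. If $\bsx^{*}$ lies in the interior of $T$, I would move a little in that gradient direction to increase $f$, a contradiction. If $\bsx^{*}$ lies on an open edge of $T$, then near $\bsx^{*}$ the edge restriction of $f$ is the strictly convex branch $t\mapsto\|\bsx(t)-\bsy_{0}\|$, which has no interior local maximum, so $\bsx^{*}$ can again be improved by an edge perturbation. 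Hence the only surviving possibility in this case is that $\bsx^{*}$ is a vertex of $T$.

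The case $k=2$ is the crux. Here $\bsx^{*}$ lies on the perpendicular bisector of the two nearest sites $\bsy_{1},\bsy_{2}$, i.e.\ in the Voronoi edge $\Vor(\bsy_{1};T)\cap\Vor(\bsy_{2};T)$. Parametrising the bisector as $\bsx(t)=m+t\hat v$ from the midpoint $m$ of $\bsy_{1}\bsy_{2}$ along a unit tangent $\hat v$ (so that $\hat v\perp(\bsy_{1}-\bsy_{2})$), the Pythagorean identity gives $\|\bsx(t)-\bsy_{1}\|^{2}=\|m-\bsy_{1}\|^{2}+t^{2}=\|\bsx(t)-\bsy_{2}\|^{2}$, so $f$ restricted to the bisector equals $\sqrt{\|m-\bsy_{1}\|^{2}+t^{2}}$ and is strictly increasing in $|t|$; moreover moving off the bisector toward either site strictly decreases $f=\min(\|\cdot-\bsy_{1}\|,\|\cdot-\bsy_{2}\|)$. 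Consequently the maximum of $f$ along this edge is attained only at an endpoint of the segment (bisector)$\,\cap\,T$. Such an endpoint is either a point where a third site becomes equidistant, hence a Voronoi vertex, or a point where the edge meets $\partial T$, i.e.\ an intersection point of a Voronoi edge with the boundary. In every case $\bsx^{*}$ is one of the three listed types.

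The main obstacle, and the place that demands care, is the interaction of the optimization with the boundary of $T$: the gradient of $f$ is generically nonzero at a maximizer, so maxima are forced onto the constraint set, and it is precisely this that produces the vertices of $T$ and the boundary intersection points as admissible locations. A secondary subtlety is that $f$ is only piecewise smooth, being nondifferentiable exactly along Voronoi edges and vertices, which is why the argument is organised around $k$ rather than a single gradient computation, and why the $k=2$ analysis must track both the on-bisector ascent and the off-bisector descent in order to legitimately conclude that only the endpoints of the Voronoi edge can be maximizers.
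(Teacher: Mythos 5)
Your proposal is correct and follows essentially the same route as the paper: a case analysis on the number of nearest sites (equivalently, the number of points of $P$ on the boundary of the largest empty disk), with a local perturbation in each case showing that a maximizer not of the three listed types could be improved. Your framing via the distance function $f(\bsx)=\min_{\bsy\in P}\|\bsx-\bsy\|$ and its nearest-site set is just a rephrasing of the paper's empty-disk argument, with the minor tidiness that the $A=0$ case is automatically excluded.
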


\begin{proof}
Let $\bsx_{\max} \in T$ denote the center of a largest empty disk, and let $M \ge 0$ be the number of points in $P$ that lie on its boundary.  

\begin{itemize}
\item If $M = 0$, this contradicts the definition of the largest empty disk, since the disk could be enlarged without hitting any point in $P$.
\item If $M = 1$, the disk touches exactly one point of $P$. If the center $\bsx_{\max}$ is in the interior of $T$, one could move it slightly away from that point to increase the radius, contradicting maximality. If it is on the interior of an edge, the center could be moved along the edge to enlarge the disk, unless it is pinned at a vertex. Therefore, $\bsx_{\max}$ must be a vertex of $T$ in this case.
\item If $M = 2$, the center lies on the perpendicular bisector of the two points, i.e., on a Voronoi edge $\Vor(\bsx_i; P, T) \cap \Vor(\bsx_j; P, T)$. If this edge is internal (i.e., its endpoints are Voronoi vertices), the radius can be increased by moving the center along the edge towards the endpoint with the larger radius. The maximum along this internal edge is attained only at an endpoint, which is a Voronoi vertex (the case with $M \ge 3$). Thus, $\bsx_{\max}$ cannot lie on an internal Voronoi edge while $M=2$. The only remaining possibility is that the edge intersects the boundary of $T$, and $\bsx_{\max}$ coincides with this intersection of a Voronoi edge with the boundary.
\item If $M \ge 3$, since no three points of $P$ that lie on the boundary of the disk can be collinear (otherwise no circle passes through them), the center is uniquely determined as the point equidistant from these $M$ points. This is exactly a Voronoi vertex of $P$.
\end{itemize}
No other possibilities exist: $\bsx_{\max}$ cannot lie in the interior of a Voronoi cell corresponding to a single site (it could be moved to increase the radius), nor in the interior of an edge without touching 2 points. 
Therefore, the center of the largest empty disk lies either at a Voronoi vertex, an intersection of a Voronoi edge with the boundary, or a vertex of $T$, as claimed.
\end{proof}

To prepare for our proposed algorithm in the next section, we study the geometric properties of the point set consisting of the three vertices of a triangle.

\begin{lemma}\label{lem:initial}
Let $T= \triangle ABC$ be a triangle with edge lengths $a\ge b\ge c$, and let $P=\{A,B,C\}$ be the set of its three vertices. Then
\[
q(P;T) = \frac{c}{2}, \quad
h(P;T) = 
\begin{cases} 
ab^2/(a^2+b^2-c^2) & \text{if $T$ is obtuse}, \\[1mm]
R & \text{otherwise},
\end{cases}
\]
where $R$ denotes the radius of the circumcircle of $T$.
\end{lemma}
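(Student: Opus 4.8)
The plan is to handle the two radii separately. The separation radius is immediate: the only pairwise distances among the points of $P=\{A,B,C\}$ are the three edge lengths $a,b,c$, so $\min_{\bsx\neq\bsy}\|\bsx-\bsy\|=\min\{a,b,c\}=c$, and hence $q(P;T)=c/2$ directly from the definition. The covering radius is the substantive part, and I would attack it through the Voronoi machinery already developed.

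For the covering radius I would start from $h(P;T)=r_{\max}(P;T)$ (Lemma~\ref{lem:cv_radius}) and use Lemma~\ref{lem:circle_center} to restrict the center of the largest empty disk to the finite candidate set consisting of Voronoi vertices, intersections of Voronoi edges with $\partial T$, and vertices of $T$. Since here $P$ consists precisely of the three vertices of $T$, the triangle-vertex candidates give empty disks of radius $0$ and are discarded. The Voronoi diagram of three points is governed entirely by the three perpendicular bisectors of the edges, which concur at the circumcenter $O$; this is the unique Voronoi vertex. The decisive dichotomy is whether $O\in T$, and I would invoke the standard fact that $O$ lies in $T$ precisely when $T$ is non-obtuse (it sits on the longest edge in the right-angled case and outside $T$ in the obtuse case).

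In the non-obtuse case $O$ is an admissible candidate with empty radius equal to the circumradius $R$, and I would argue it dominates every boundary-intersection candidate: on each side the nearest-vertex distance is maximized at the midpoint, where it equals half that side's length and hence is at most $R=a/(2\sin A)\ge a/2$, while $O$ attains the value $R$; thus $h(P;T)=R$. In the obtuse case $O\notin T$, so the maximizer must be an intersection of a Voronoi edge with $\partial T$. Placing the longest side $a$ on the $x$-axis with the obtuse vertex above it, and letting $\alpha$ be the angle between the two longest sides $a$ and $b$ (so that $\cos\alpha=(a^2+b^2-c^2)/(2ab)$), a short computation shows that the perpendicular bisector of the side of length $b$ meets the longest side at distance $b/(2\cos\alpha)=ab^2/(a^2+b^2-c^2)$ from their common vertex, and this distance is exactly the empty radius there. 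One then checks that this point really lies on the segment and that its two equidistant vertices are the nearest ones—both conditions being equivalent to $a^2\ge b^2+c^2$, i.e.\ to obtuseness.

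The final and main step is to confirm this candidate is the global maximum. The competing intersection coming from the side of length $c$ gives empty radius $ac^2/(a^2+c^2-b^2)$, and the remaining intersections, lying at edge midpoints, give the smaller values $b/2$ and $c/2$. The decisive inequality
\[
\frac{ab^2}{a^2+b^2-c^2}\;\ge\;\frac{ac^2}{a^2+c^2-b^2}
\]
becomes, after clearing the positive denominators and using $b\ge c$, equivalent to $a^2\ge b^2+c^2$, which holds precisely because $T$ is obtuse. I expect the bookkeeping of the obtuse case to be the main obstacle: correctly enumerating the finitely many Voronoi-edge/boundary intersections, verifying that the maximizing one lies on the longest side, and reducing the comparison to the clean condition $a^2\ge b^2+c^2$. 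The non-obtuse case is conceptually easier but still requires the observation that the circumcenter dominates all boundary candidates.
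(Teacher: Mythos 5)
Your proposal is correct and follows essentially the same route as the paper's proof: the separation radius from the definition, and the covering radius via Lemmas~\ref{lem:cv_radius} and~\ref{lem:circle_center}, splitting on whether the circumcenter lies in $T$ and identifying the maximizing boundary intersection (the perpendicular bisector of the side of length $b$ meeting the longest side) in the obtuse case. You in fact spell out the candidate comparison $ab^2/(a^2+b^2-c^2)\ge ac^2/(a^2+c^2-b^2)$ that the paper compresses into ``a straightforward calculation,'' which is a welcome addition; the only tiny imprecision is that $D$ lying on the segment $BC$ follows from $c\le a$ alone, while it is the condition that $A$ and $C$ (rather than $B$) are the nearest sites to $D$ that is equivalent to $a^2\ge b^2+c^2$.
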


\begin{proof}
The separation radius \(q(P;T)\) is defined as half of the minimal distance between points in \(P\). Since the shortest edge of the triangle has length \(c\), we immediately have $q(P;T)=c/2$.

For the covering radius $h(P;T)$, consider the largest empty disk problem with sites $P=\{A,B,C\}$. By Lemma~\ref{lem:circle_center}, the center of any largest empty disk must be either a Voronoi vertex (an intersection of perpendicular bisectors), an intersection of a Voronoi edge with the boundary $\partial T$, or a vertex of $T$. However, in the present case, $P$ coincides with the set of vertices of $T$, so the last possibility can be excluded.

The unique Voronoi vertex for three noncollinear points is the circumcenter of $T$, whose radius is $R$. If the circumcenter lies inside $T$ (this happens exactly when $T$ is acute or right), then it is a valid center and yields $h(P;T)=R$.

If the circumcenter lies outside $T$ (equivalently, if $T$ is obtuse), then the circumcenter is not an admissible center. In this case, the remaining candidates are the intersection points of the perpendicular bisectors with the boundary $\partial T$; we denote this set by $Q$, which consists of at most six points. Hence 
\[ h(P;T)=\max_{\bsx\in Q}\min_{\bsy\in \{A,B,C\}}\|\bsx-\bsy\|.\] 
Without loss of generality, we place the triangle as
\[
B=(0,0),\qquad C=(a,0),\qquad A=(x_A,y_A),
\]
with
\[
x_A=\frac{a^2-b^2+c^2}{2a},\qquad
y_A=\sqrt{c^2-x_A^2}>0,
\]
so that $\|BC\|=a,\ \|CA\|=b,\ \|AB\|=c$. 
A straightforward calculation shows that the maximum in the covering radius is attained at the intersection point $D$ between the perpendicular bisector of $CA$ and the opposite edge $BC$. This intersection has coordinates
\[ D=\left( \frac{a(a^2-c^2)}{a^2+b^2-c^2}, 0 \right),\]
and the corresponding minimum distance to $P=\{A,B,C\}$ is
\[ \|DA\| = \|DC\| = \frac{ab^2}{a^2+b^2-c^2}.\]
This completes the proof.
\end{proof}

As a direct consequence of the previous lemma, we can establish an upper bound on the mesh ratio for the vertex set of a triangle:

\begin{corollary}\label{cor:mesh_ratio_vertices}
Let $T = \triangle ABC$ be a triangle, and let $P = \{A, B, C\}$ be the set of its three vertices. Then
\[
\rho(P;T) \le \frac{1}{\sin \theta_{\min}},
\]
where $\theta_{\min}$ denotes the smallest interior angle of $T$.
\end{corollary}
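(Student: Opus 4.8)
The plan is to apply Lemma~\ref{lem:initial} directly and reduce everything to elementary trigonometric identities via the laws of sines and cosines. First I would fix the convention that the edges satisfy $a\ge b\ge c$, and denote by $\alpha,\beta,\gamma$ the interior angles opposite $a,b,c$ respectively; then $\alpha\ge\beta\ge\gamma$, so the smallest angle is $\theta_{\min}=\gamma$, the one opposite the shortest edge. By Lemma~\ref{lem:initial} we have $q(P;T)=c/2$ in every case, while the value of $h(P;T)$, and hence the form of the argument, splits according to whether $T$ is obtuse.

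In the non-obtuse case, $h(P;T)=R$, the circumradius. The law of sines gives $c=2R\sin\gamma$, so that
\[
\rho(P;T)=\frac{R}{c/2}=\frac{R}{R\sin\gamma}=\frac{1}{\sin\gamma}=\frac{1}{\sin\theta_{\min}},
\]
and the asserted bound in fact holds with equality.

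In the obtuse case, $h(P;T)=ab^2/(a^2+b^2-c^2)$. Here I would rewrite the denominator using the law of cosines, $a^2+b^2-c^2=2ab\cos\gamma$, and convert the ratio $b/c$ via the law of sines, $b/c=\sin\beta/\sin\gamma$. This collapses the mesh ratio to
\[
\rho(P;T)=\frac{2ab^2}{c\,(a^2+b^2-c^2)}=\frac{b}{c\cos\gamma}=\frac{\sin\beta}{\sin\gamma\cos\gamma},
\]
so the desired inequality $\rho(P;T)\le 1/\sin\theta_{\min}=1/\sin\gamma$ reduces to the clean statement $\sin\beta\le\cos\gamma$.

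The one step requiring genuine care, and the main obstacle, is verifying $\sin\beta\le\cos\gamma$ in this obtuse case. The key observation is that, since the obtuse angle is opposite the longest side, it must be $\alpha$; hence $\beta+\gamma=\pi-\alpha<\pi/2$, which gives $\beta<\pi/2-\gamma$. As $\gamma$ is the smallest angle we have $0<\gamma\le\pi/3$, so both $\beta$ and $\pi/2-\gamma$ lie in $(0,\pi/2)$, where the sine function is strictly increasing; therefore $\sin\beta<\sin(\pi/2-\gamma)=\cos\gamma$, which completes the argument (with strict inequality in the obtuse case). The only pitfalls worth double-checking are the correct identification of $\gamma$ as the minimal angle and of $\alpha$ as the obtuse one, both of which follow from the ordering $a\ge b\ge c$ together with the monotone correspondence between side lengths and opposite angles.
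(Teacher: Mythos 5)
Your proof is correct, and it follows the paper's overall structure (invoke Lemma~\ref{lem:initial}, split on whether $T$ is obtuse, use the sine rule), with the non-obtuse case identical to the paper's. The obtuse case is where you diverge: the paper uses the crude but quick estimate $h(P;T)=ab^2/(a^2+b^2-c^2)\le a/2$ (via $a^2>b^2+c^2$) and then $a/c=\sin\alpha/\sin\theta_{\min}\le 1/\sin\theta_{\min}$, whereas you compute the mesh ratio exactly via the law of cosines, obtaining $\rho(P;T)=\sin\beta/(\sin\gamma\cos\gamma)$, and reduce the claim to $\sin\beta\le\cos\gamma$, which follows from $\beta+\gamma<\pi/2$. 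Your route is slightly longer but buys more: it gives the exact value of $\rho(P;T)$ in the obtuse case (not just an upper bound) and shows the inequality is strict there, which the paper's two-step estimate obscures. All the steps check out — in particular $\cos\gamma>0$ since $\gamma\le\pi/3$, and both $\beta$ and $\pi/2-\gamma$ lie in $(0,\pi/2)$ where sine is increasing — so the argument is complete as written.
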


\begin{proof}
Let $c$ denote the length of the side opposite $\theta_{\min}$ (so $c$ is the shortest side). By the previous lemma, the covering radius $h(P;T)$ takes two possible forms depending on the triangle type: If $T$ is acute or right, we have $\rho(P;T)=2R/c$. By the sine rule, it holds that $c = 2R \sin \theta_{\min}$, and thus
\[
\rho(P;T) = \frac{1}{\sin \theta_{\min}}.
\]
If $T$ is obtuse, we have $a^2>b^2+c^2$ with $a$ being the longest side, giving
\[ h(P;T)=\frac{ab^2}{a^2+b^2-c^2}\le \frac{ab^2}{(b^2+c^2)+b^2-c^2}=\frac{a}{2}.\]
Using the sine rule again, it holds that $a = 2R \sin A$ and $c = 2R \sin \theta_{\min}$, so
\[
\rho(P;T) \le \frac{a}{c}= \frac{\sin A}{\sin \theta_{\min}} \le \frac{1}{\sin \theta_{\min}}.
\]
Combining both cases completes the proof.
\end{proof}

\section{Constructive algorithms}\label{sec:construction}

In this section, we present constructive algorithms for generating quasi-uniform point sequences over triangular domains.
Our primary interest is in \emph{extensible} constructions, that is, infinite sequences whose initial segments of $n$ points have a bounded mesh ratio for all $n\ge 2$, rather than in the one-shot generation of a single $n$-point set.
Our main contribution is the proposal of the \emph{Voronoi-guided greedy packing} (VG) algorithm, which iteratively places points based on geometric information derived from the Voronoi diagram of the current point set.
The VG algorithm guarantees that, after a finite number of iterations, the mesh ratio of the constructed point set is at most~2, which is the best possible.
We further analyze the quasi-uniformity of the existing triangular low-discrepancy point sets proposed in~\cite{BO15}, providing a theoretical comparison with the proposed method.

\subsection{Voronoi‐guided greedy packing (VG) algorithm}

To introduce our VG algorithm, we start from the simple greedy packing algorithm proposed in~\cite{PZ23}, which in general form proceeds as shown in Algorithm~\ref{alg:greedy}. It iteratively adds the point that attains the covering radius, i.e., the point farthest from the existing points, to the current point set. Since, for a general domain $\Omega$, it is computationally hard to identify that point, the algorithm is not fully constructive.

\begin{algorithm}[t]
  \caption{Simple greedy packing}\label{alg:greedy}
  \begin{algorithmic}[1]
    \Require Compact subset $\Omega \subset \mathbb{R}^d$
    \State Initialize $P_1 \gets \{\bsx_1\}$ with $\bsx_1\in \Omega$
    \For{$n = 1,2,\dots$}
      \State Select the next point
        $\bsx_{n+1} \gets 
           \displaystyle \underset{\bsx \in \Omega}{\arg\max}
             \min_{1\le i \le n}\|\bsx - \bsx_i\|$
      \State Update the point set
        $P_{n+1} \gets P_n \cup \{\bsx_{n+1}\}$
    \EndFor
  \end{algorithmic}
\end{algorithm}

Our VG algorithm can be regarded as a constructive version of this method. As proven in Lemma~\ref{lem:circle_center}, the point attaining the covering radius must belong to a set of finite candidates, allowing it to be identified with finite computational cost. 
This way, Algorithm~\ref{alg:greedy} can be replaced by a computationally feasible algorithm for triangle domains.
Algorithm~\ref{alg:VG} presents the full procedure of the VG algorithm.

\begin{algorithm}[t]
  \caption{Voronoi-guided greedy packing (VG)}\label{alg:VG}
  \begin{algorithmic}[1]
    \Require Triangle domain $T = \triangle ABC \subset \bbR^2$
    \State Initialize $P_3 = \{\bsx_1,\bsx_2,\bsx_3\} = \{A,B,C\}$
    \For{$n = 3,4,5,\dots$}
      \State Compute the Voronoi diagram of $P_n$ in $T$
      \State Let $\Ccal_n$ be the set of candidate points:
        \Statex \hspace{2em} (i) all Voronoi vertices inside $T$
        \Statex \hspace{2em} (ii) all intersections of Voronoi edges with the boundary $\partial T$
      \State Select the next point
        $
          \bsx_{n+1} \gets 
          \displaystyle \underset{\bsx \in \Ccal_n}{\arg\max} \;\min_{1\le i \le n}\|\bsx - \bsx_i\|
        $
      \State Update the point set: $P_{n+1} \gets P_n \cup \{\bsx_{n+1}\}$
    \EndFor
  \end{algorithmic}
\end{algorithm}

Lines~1 and 2 in Algorithm~\ref{alg:VG} can be replaced by ``Initialize $P_1=\{\bsx_1\}$ with $\bsx_1\in T$'' and ``\textbf{for $n=1,2,3,\ldots$ do}'', respectively. In this case, the vertices of $T$ should also be included in the set $\Ccal_n$ in line~4 (cf.\ Lemma~\ref{lem:circle_center}). Then, the resulting sequence is identical to that generated by the simple greedy packing algorithm (Algorithm~\ref{alg:greedy}), up to local ordering differences when the $\arg\max$ in line~5 is not unique.
Furthermore, if the first three points generated by Algorithm~\ref{alg:greedy} coincide with the three vertices of $T$, then both algorithms will produce the same subsequent sequence of points. This follows directly from Lemma~\ref{lem:circle_center}. Such a scenario occurs, for example, when $T$ is an equilateral triangle and the initial point for the simple greedy packing algorithm is chosen as one of its vertices. 
Then, it follows from \cite{PZ23} that the mesh ratio of the generated point set is bounded by $2$ for all $n$.

Nevertheless, we choose to initialize the algorithm with the three-vertex configuration. This choice is conceptually and computationally advantageous for several reasons. First, as already noted, we do not need to include the vertices of $T$ in the candidate set $\Ccal_n$. Second, the three vertices define the target domain, so including them from the outset ensures that the generated point set explicitly captures the boundary. Third, incorporating the corners from the beginning helps distribute subsequent points more evenly across the interior, preventing any corners from being omitted.

As observed in Corollary~\ref{cor:mesh_ratio_vertices}, the mesh ratio for the initial three-vertex configuration cannot always be strictly bounded by 2, particularly when the smallest angle $\theta_{\min}$ satisfies $\theta_{\min} < \pi/6$. This highlights a limitation compared to the uniform mesh-ratio bound of $2$ given in \cite{PZ23}. Interestingly, the choice of our initialization, i.e., the three-vertex configuration, becomes less significant as the sequence grows. We will show that, after finitely many iterations, the mesh ratio of the point set constructed by the VG algorithm is guaranteed to be at most 2. This result further enriches the theoretical understanding of the greedy packing approach (cf.\ Remark~\ref{rem:extension}).

\begin{remark}[Computational cost]
In the VG algorithm, each iteration requires the Voronoi diagram of the current point set $P_n$ in $T$. Using standard planar algorithms such as the sweepline method \cite{F87,BCKO08}, this can be computed in $O(n\log n)$ time. The next point is chosen from the candidate set consisting of Voronoi vertices inside $T$ and intersections of Voronoi edges with the boundary $\partial T$, whose cardinality is $O(n)$. Once the Voronoi diagram is available, the quantity $\min_{1\le i\le n}\|\bsx-\bsx_i\|$ for each candidate $\bsx$ is determined by the local Voronoi structure. Hence, selecting the next point requires only an additional $O(n)$ scan over the candidate set. Therefore, if the Voronoi diagram is recomputed from scratch at each iteration, generating the first $N$ points requires $O(N^2\log N)$ time in total.

In an incremental implementation, the Voronoi diagram can be updated locally after inserting the new point (see, e.g., \cite{LZ06} for local updating policies). Furthermore, by maintaining the candidate points and their corresponding distances in a priority queue (such as a max-heap), the next furthest point can be extracted in $O(\log n)$ time. Since the generated point sets are proven later to be quasi-uniform (cf.~Theorem~\ref{thm:main}), the local degree of each vertex in the dual Delaunay triangulation is bounded by a constant. This guarantees that only $O(1)$ Voronoi vertices are inserted or deleted during the local update. Consequently, maintaining the data structures takes $O(\log n)$ time per iteration, which optimally reduces the overall time complexity for generating $N$ points to $O(N \log N)$.
\end{remark}

Figure~\ref{fig:example_VG} shows an example of the point sets generated by the VG algorithm for $n=3$ to $n=11$. As observed, the points are distributed quasi-uniformly over the triangular domain, with each new point placed farthest from the existing points according to the Voronoi-guided procedure.

\begin{figure}[t]
    \centering
    \includegraphics[width=\linewidth]{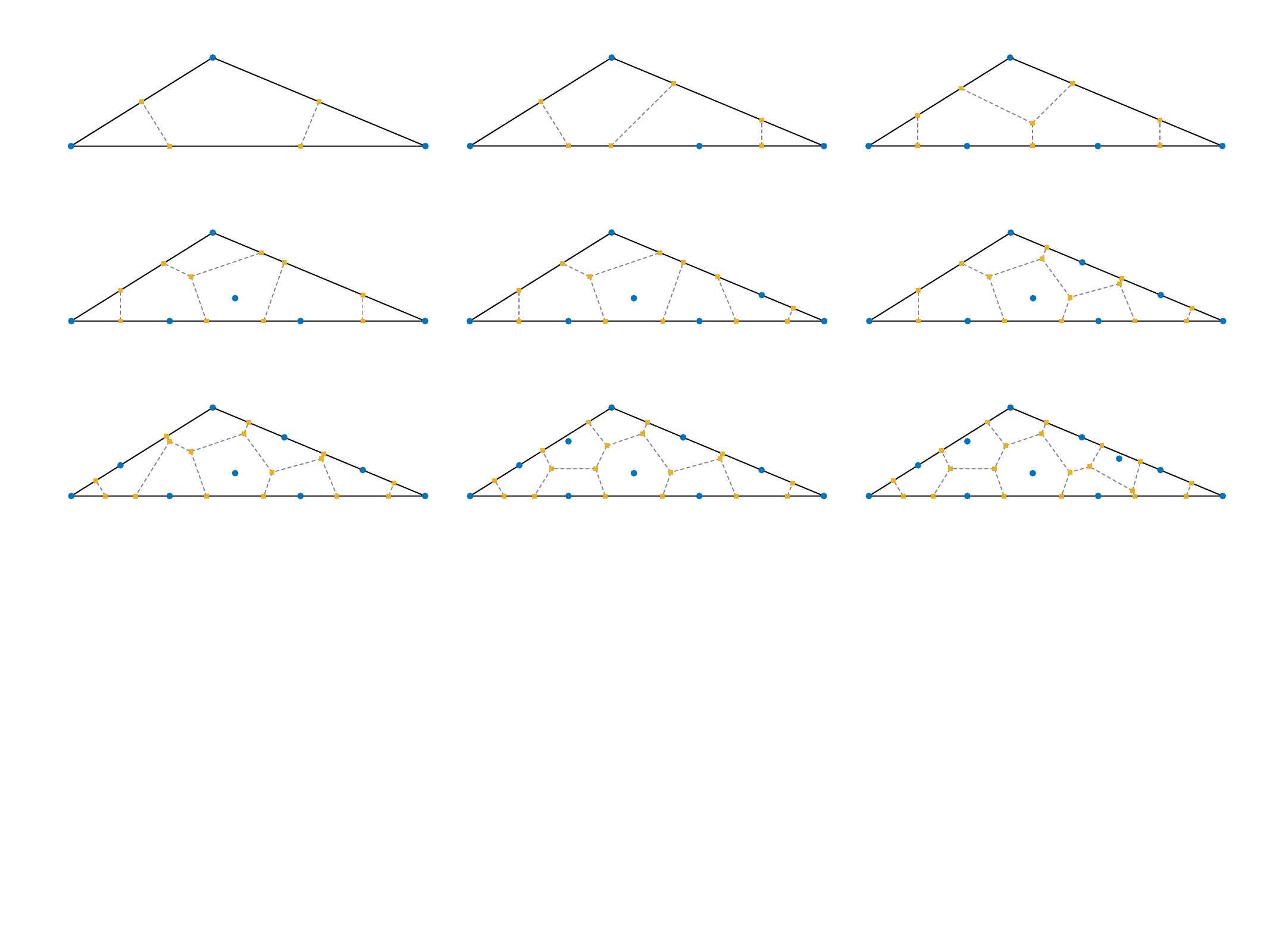}
    \caption{Example of point sets generated by the VG algorithm from $n=3$ to $n=11$. In each subplot, the blue dots denote the generated points, the dashed lines denote the Voronoi edges, and the orange dots denote the candidate set.}
    \label{fig:example_VG}
\end{figure}

\subsubsection{Quasi-uniformity for the VG algorithm}

Here, we study the quasi-uniformity of the point set generated by the VG algorithm. 
We first study the dynamics of the VG algorithm. In fact, the second item in the following lemma is stronger than what is required for proving the main result. Nevertheless, we include the full statement, as it is of independent interest.

\begin{lemma}\label{lem:dynamics}
Let $T = \triangle ABC$ be a triangle, and let $(\bsx_i)_{i\ge 1}$ be the sequence of points generated by Algorithm~\ref{alg:VG}. Denote $P_n=\{\bsx_1,\dots,\bsx_n\}$.
Then the following statements hold.
\begin{enumerate}
    \item For any $n\ge 3$,  
    \[
    q(P_{n+1}; T) = \begin{cases}
        q(P_n;T) & \text{if }\rho(P_n;T)>2,\\
        h(P_n;T)/2 & \text{otherwise.}
    \end{cases}
    \]
    \item For any $n \ge 3$, we have $h(P_{n+1}; T) \le h(P_n; T)$. Moreover, let 
    \[ \Xcal_{n}:= \underset{\bsx \in T}{\arg\max} \;\min_{1\le i \le n}\|\bsx - \bsx_i\|\]
    be the set of points that attain the covering radius; its cardinality is finite and denoted by $|\Xcal_n|$. Then, after inserting all points of $\Xcal_n$ (in any order), we have a strict inequality
    \[ h(P_{n+|\Xcal_n|}; T) < h(P_n; T). \]
    \item If $\rho(P_n; T) > 2$ for some $n\ge 3$, then $\|\bsx_i - \bsx_j\| \ge 2q(P_3; T)$ for any $1\le i<j\le n$.
\end{enumerate}
\end{lemma}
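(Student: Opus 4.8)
The backbone of all three items is a single observation that I would establish first. Since $P_3=\{A,B,C\}\subseteq P_n$ for every $n\ge 3$, the three vertices of $T$ already lie in $P_n$ and hence have empty radius $0$. Therefore, in Lemma~\ref{lem:circle_center}, the ``vertex of $T$'' possibility can be discarded, and every center of a largest empty disk of $P_n$ is either a Voronoi vertex or a Voronoi-edge/boundary intersection, both of which belong to the candidate set $\Ccal_n$. Combining this with Lemma~\ref{lem:cv_radius} yields the identity
\[
\min_{1\le i\le n}\|\bsx_{n+1}-\bsx_i\|=\max_{\bsx\in\Ccal_n}\min_{1\le i\le n}\|\bsx-\bsx_i\|=h(P_n;T),
\]
so the point inserted by the VG algorithm is a genuine farthest point, exactly as in Algorithm~\ref{alg:greedy}. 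I would prove this identity at the outset, as everything rests on it.

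For item~(1), I would decompose the minimum pairwise distance of $P_{n+1}=P_n\cup\{\bsx_{n+1}\}$ into the minimum distance within $P_n$, which equals $2q(P_n;T)$, and the distance from $\bsx_{n+1}$ to $P_n$, which equals $h(P_n;T)$ by the identity above. This gives $q(P_{n+1};T)=\min\{q(P_n;T),\,h(P_n;T)/2\}$, and the two branches of the claimed formula correspond precisely to whether $h(P_n;T)/2$ exceeds $q(P_n;T)$, i.e.\ to the dichotomy $\rho(P_n;T)>2$ versus $\rho(P_n;T)\le 2$.

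For item~(2), monotonicity $h(P_{n+1};T)\le h(P_n;T)$ is immediate from $P_n\subseteq P_{n+1}$ and the definition of the covering radius, and finiteness of $\Xcal_n$ is a direct consequence of Lemma~\ref{lem:circle_center}, since its elements are drawn from the finite list of Voronoi vertices and boundary intersections. The strict-decrease assertion I would prove by contradiction on the final set $P_n\cup\Xcal_n$ (which coincides with $P_{n+|\Xcal_n|}$ because $\Xcal_n\cap P_n=\emptyset$): writing $h=h(P_n;T)$, if $h(P_n\cup\Xcal_n;T)=h$ then some $\bsx^\ast\in T$ is at distance $\ge h$ from every point of $P_n\cup\Xcal_n$; being $\ge h$ from $P_n$ together with the fact that the covering radius of $P_n$ is exactly $h$ forces $\bsx^\ast\in\Xcal_n$, but then $\bsx^\ast$ is one of the points it must avoid, giving distance $0<h$, a contradiction. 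I expect this contradiction step to be the only real subtlety, since a naive sequential argument would have to rule out the creation of fresh maximizers during intermediate insertions; reasoning directly about $P_n\cup\Xcal_n$ sidesteps the insertion order, which is exactly why the statement allows ``any order''.

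Finally, item~(3) follows from items~(1) and~(2) together with one extra dynamical fact, which I regard as the conceptual crux: the condition $\rho\le 2$ is absorbing. Indeed, if $\rho(P_k;T)\le 2$, then item~(1) gives $q(P_{k+1};T)=h(P_k;T)/2$, and since $h(P_{k+1};T)\le h(P_k;T)$ by item~(2),
\[
\rho(P_{k+1};T)=\frac{h(P_{k+1};T)}{h(P_k;T)/2}=\frac{2\,h(P_{k+1};T)}{h(P_k;T)}\le 2 .
\]
By induction, $\rho\le 2$ at one step persists at all later steps; contrapositively, $\rho(P_n;T)>2$ forces $\rho(P_k;T)>2$ for every $3\le k\le n$. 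Then the first branch of item~(1) applies at each step, so $q(P_n;T)=q(P_{n-1};T)=\cdots=q(P_3;T)$, and consequently every pairwise distance in $P_n$ is at least $2q(P_n;T)=2q(P_3;T)$, which is the claim. The verification of the absorbing property is a one-line computation, but recognizing it as the right invariant is the key idea.
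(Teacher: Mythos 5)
Your proposal is correct and follows essentially the same route as the paper's proof: item~(1) via the decomposition of the minimum pairwise distance using $d_{P_n}(\bsx_{n+1})=h(P_n;T)$ from Lemma~\ref{lem:circle_center}, item~(2) via monotonicity of $\bsy\mapsto\min_{\bsx\in P}\|\bsx-\bsy\|$ under insertion plus finiteness of $\Xcal_n$, and item~(3) via the absorbing/non-increasing behaviour of the mesh ratio forcing $q(P_n;T)=q(P_3;T)$. Your final step of item~(3) (bounding all pairwise distances directly by $2q(P_n;T)$) is marginally more streamlined than the paper's chain through $h(P_{j-1};T)=\rho(P_{j-1};T)\,q(P_{j-1};T)$, but the argument is the same in substance.
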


\begin{proof}
    We prove each item in turn. Throughout the proof, we write $d_{P}(\bsy)=\min_{\bsx \in P}\|\bsx-\bsy\|$.

    Regarding the first item, it follows from  Lemma~\ref{lem:circle_center} that the point $\bsx_{n+1}$ satisfies $d_{P_n}(\bsx_{n+1})=h(P_n;T)$. Thus, for the separation radius of $P_{n+1}=P_n\cup\{\bsx_{n+1}\}$, it holds that
    \begin{align*}
        q(P_{n+1};T) & = \frac{1}{2}\min\left\{\min_{1\le i<j\le n}\|\bsx_i-\bsx_j\|,\ \min_{1\le i\le n}\|\bsx_{n+1}-\bsx_i\|\right\} \\
        & = \frac{1}{2}\min\{2q(P_n;T),\, d_{P_n}(\bsx_{n+1})\}\\
        & = \frac{1}{2}\min\{2q(P_n;T),\, h(P_n;T)\}\\
        & = \begin{cases}
            q(P_n;T) & \text{if $\rho(P_n;T)>2$,}\\
            h(P_n;T)/2 & \text{otherwise.}
        \end{cases}.
    \end{align*}
    This proves the claim.
    
    Regarding the second item, fix $\bsy\in T$ and a point set $P$. For any $\bsx\in T$ we have
    \[
    d_{P\cup\{\bsx\}}(\bsy)
    = \min\{d_P(\bsy),\|\bsx-\bsy\|\} \le d_P(\bsy).
    \]
    Taking the supremum over $\bsy\in T$ yields the non-increasing property
    \[
    h(P\cup\{\bsx\};T)=\max_{\bsy\in T}  d_{P\cup\{\bsx\}}(\bsy) \le \max_{\bsy\in T}  d_P(\bsy)=h(P;T).
    \]
    Substituting $P=P_n$ and $\bsx=\bsx_{n+1}$ proves $h(P_{n+1};T)\le h(P_n;T)$.

    It remains to prove the strict decrease after inserting all maximizers. By Lemma~\ref{lem:circle_center}, the function $d_{P_n}(\cdot)=\min_{1\le i\le n}\|\cdot-\bsx_i\|$ attains its maximum only at a finite set of Voronoi vertices and at a finite number of intersections of Voronoi edges with $\partial T$. Hence $\Xcal_n$ is finite. 
    
    For each $\bsx \in \Xcal_n$ we have $d_{P_n}(\bsx)=h(P_n;T)$. If we insert $\bsx$ into the point set, then $d_{P_n\cup\{\bsx\}}(\bsx)=0 < d_{P_n}(\bsx)$, while $d_{P_n\cup\{\bsx\}}(\bsy)\le d_{P_n}(\bsy)$ for any $\bsy\in T$. Thus, inserting any single $\bsx \in\Xcal_n$ strictly reduces the value of $d_{P_n}$ at this particular maximizer, but other maximizers in $\Xcal_n$ may still attain the old maximum. If we insert all points of $\Xcal_n$ (in any order) to obtain $P_{n+|\Xcal_n|} = P_n\cup\Xcal_n$, then every maximizer $\bsx\in\Xcal_n$ satisfies $d_{P_{n+|\Xcal_n|}}(\bsx)=0 < h(P_n;T)$. All other points $\bsy \in T\setminus \Xcal_n$ already had $d_{P_n}(\bsy) < h(P_n;T)$. Hence
    \[
        \max_{\bsy\in T} d_{P_{n+|\Xcal_n|}}(\bsy) < h(P_n;T),
    \]
    i.e., $h(P_{n+|\Xcal_n|};T)<h(P_n;T)$, as claimed.
    
    Finally, we prove the third item. It follows from the first two items that, if $\rho(P_n; T) > 2$, we have
    \[ \rho(P_{n+1}; T)=\frac{h(P_{n+1}; T)}{q(P_{n+1}; T)}\le \frac{h(P_n; T)}{q(P_n; T)}=\rho(P_n; T), \]
    implying the non-increasing property of the mesh ratio. 
    Thus, we know that $\rho(P_3;T)\geq \rho(P_4;T)\geq \cdots \geq \rho(P_n;T)>2$, and for any $1\le i<j\le n$ with $j\ge 4$, we obtain
    \begin{align*}
        \|\bsx_i - \bsx_j\| & \ge d_{P_{j-1}}(\bsx_j) = h(P_{j-1};T) = \rho(P_{j-1};T) q(P_{j-1};T) \\
        & > 2q(P_{j-1};T)=2q(P_3;T),
    \end{align*} 
    where the last equality follows from the first item of this lemma.
    If $j\le 3$, we simply have $\|\bsx_i - \bsx_j\|\ge 2q(P_3;T)$ by the definition of the separation radius. This completes the proof.
\end{proof}

We now state the main theorem of this paper, which gives an explicit upper bound on the number of points required for the mesh ratio to reach the optimal value of 2.

\begin{theorem}\label{thm:main}
Let $T = \triangle ABC$ be a triangle, and let $(\bsx_i)_{i\ge 1}$ be the sequence of points generated by Algorithm~\ref{alg:VG}. Denote $P_n=\{\bsx_1,\dots,\bsx_n\}$.

\begin{enumerate}
\item If $\rho(P_3;T)\leq 2$, then $\rho(P_n;T)\leq 2$ for all $n\ge 4$. 
\item If $\rho(P_3;T)> 2$, then $\rho(P_n;T)\leq 2$ for all $n> K$, where
\[
K \le \left\lfloor \frac{A_T+L_T\, q(P_3;T)+\pi\, q(P_3;T)^2}{\pi\, q(P_3;T)^2}\right\rfloor ,
\]
and $A_T$ and $L_T$ are the area and perimeter of $T$, respectively.
\end{enumerate}
\end{theorem}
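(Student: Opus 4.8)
The plan is to treat the two items separately, drawing on the structural facts established in Lemma~\ref{lem:dynamics}.

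\medskip

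For the first item I would argue by induction, the engine being the implication
\[
\rho(P_n;T)\le 2\ \Longrightarrow\ \rho(P_{n+1};T)\le 2,\qquad n\ge 3.
\]
Indeed, if $\rho(P_n;T)\le 2$ we are in the ``otherwise'' branch of item~1 of Lemma~\ref{lem:dynamics}, so $q(P_{n+1};T)=h(P_n;T)/2$; combining this with the non-increasing property $h(P_{n+1};T)\le h(P_n;T)$ from item~2 gives
\[
\rho(P_{n+1};T)=\frac{h(P_{n+1};T)}{q(P_{n+1};T)}\le\frac{h(P_n;T)}{h(P_n;T)/2}=2.
\]
Starting the induction from the hypothesis $\rho(P_3;T)\le 2$ yields the first item at once. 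The same implication shows that the condition $\rho\le 2$ is \emph{absorbing}: once the mesh ratio drops to at most $2$ it can never exceed $2$ again, a fact I reuse below.

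\medskip

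For the second item the strategy is a disk-packing/area estimate. Write $r:=q(P_3;T)$. For \emph{any} index $n$ with $\rho(P_n;T)>2$, item~1 of Lemma~\ref{lem:dynamics} keeps the separation radius frozen, $q(P_n;T)=q(P_3;T)=r$, and item~3 guarantees $\|\bsx_i-\bsx_j\|\ge 2r$ for all $1\le i<j\le n$. Hence the $n$ closed disks $\overline{B}(\bsx_i,r)$ have pairwise disjoint interiors, so their union has area exactly $n\pi r^2$. Since every center $\bsx_i$ lies in $T$, each disk is contained in the $r$-neighborhood $T_r:=\{\bsz\in\bbR^2:\ \mathrm{dist}(\bsz,T)\le r\}$. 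Because $T$ is convex, the planar Steiner formula gives $\Area(T_r)=A_T+L_T\,r+\pi r^2$, and the area comparison yields
\[
n\,\pi r^2\ \le\ A_T+L_T\,r+\pi r^2,
\qquad\text{i.e.}\qquad
n\ \le\ \frac{A_T+L_T\,q(P_3;T)+\pi\,q(P_3;T)^2}{\pi\,q(P_3;T)^2}.
\]
Thus every index at which the mesh ratio exceeds $2$ is bounded by the right-hand side. This set of ``bad'' indices is nonempty (it contains $3$ by hypothesis) and finite, so it has a maximum $K$; since $K$ is an integer satisfying the displayed inequality, it obeys the stated floor bound. By maximality of $K$ we have $\rho(P_n;T)\le 2$ for every $n>K$, which (consistently with the absorbing property) is exactly the claim.

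\medskip

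The main obstacle I anticipate is the bookkeeping that converts the ``frozen separation'' phenomenon into a clean packing statement: one must verify that while $\rho>2$ the separation radius genuinely does not shrink, so that a single common radius $r=q(P_3;T)$ serves for all disks, and that the disks—though centered in $T$ rather than contained in it—are fully captured by the Steiner neighborhood $T_r$. Once these two points are secured, the remaining area comparison and the extraction of the finite threshold $K$ are elementary.
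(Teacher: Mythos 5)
Your proposal is correct and follows essentially the same route as the paper's proof: the first item via the absorbing implication $\rho(P_n;T)\le 2\Rightarrow\rho(P_{n+1};T)\le 2$ from items~1 and~2 of Lemma~\ref{lem:dynamics}, and the second via the disk-packing comparison against the Minkowski/Steiner neighborhood $T\oplus B(\bszero,q(P_3;T))$ using item~3. Your explicit extraction of $K$ as the maximum of the finite set of bad indices is just a slightly more careful phrasing of the paper's concluding step.
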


Note that, in the second item above, since $\{\bsx_1,\bsx_2,\bsx_3\}=\{A,B,C\}$, Lemma~\ref{lem:initial} implies that $q(P_3;T)$ equals half of the shortest edge length of $T$. Hence, the bound on $K$ is completely determined by the shape of $T$. More precisely, if $T= \triangle ABC$ has the edge lengths $a\ge b\ge c$, the bound on $K$ is equal to
\[
\left\lfloor 
\sqrt{\left(\frac{a+b}{c}+1\right)
\left(\frac{a+b}{c}-1\right)
\left(1+\frac{a-b}{c}\right)
\left(1+\frac{b-a}{c}\right)}
+\frac{2}{\pi}\left(\frac{a+b}{c}+1\right)
+1
\right\rfloor .
\]

\begin{proof}[Proof of Theorem~\ref{thm:main}]
    Suppose that there exists $n\ge 3$ such that $\rho(P_n;T)\leq 2$. Then, by the first two items of Lemma~\ref{lem:dynamics}, we have
    \[ \rho(P_{n+1};T) = \frac{h(P_{n+1};T)}{q(P_{n+1};T)}\le \frac{h(P_{n};T)}{h(P_{n};T)/2}=2. \]
    This means that $\rho(P_{n+k};T)\leq 2$ for all $k\ge 0$, proving the first item.

    If $\rho(P_n;T)> 2$, this implies that any pairwise distance $\|\bsx_i-\bsx_j\|$ is larger than or equal to $2q(P_3;T)$ for $1\le i<j\le n$. Consider placing open Euclidean disks of equal radius $q(P_3;T)$ centered at each point in $P_n$. Then, these disks are disjoint and contained in the enlarged domain given by the Minkowski sum:
    \[ T \oplus B(\bszero,q(P_3;T)):=\left\{ \bsx+\bsy\in \bbR^2\mid \bsx\in T,\, \bsy\in B(\bszero,q(P_3;T))\right\} , \]
    where $B(\bszero,q(P_3;T))$ denotes the Euclidean disk of radius $q(P_3;T)$ centered at the origin. By comparing the area of the union of the $n$ disks with that of $T \oplus B(\bszero,q(P_3;T))$, we obtain
    \[ n \pi (q(P_3;T))^2\le \Area\,(T \oplus B(\bszero,q(P_3;T))) = A_T+L_T\, q(P_3;T)+\pi\, q(P_3;T)^2, \]
    which leads to
    \[ n\leq \frac{A_T+L_T\, q(P_3;T)+\pi\, q(P_3;T)^2}{\pi (q(P_3;T))^2}.\]
    For larger $n$, we must have $\rho(P_n;T)\le 2$, completing the proof of the second item.
\end{proof}

\begin{remark}\label{rem:extension}
Following the argument used to prove Lemma~\ref{lem:dynamics} and Theorem~\ref{thm:main}, the result of \cite{PZ23} on the uniform bound of the mesh ratio can be extended as follows. 
For any compact domain $\Omega \subset \mathbb{R}^d$ and any initial point set $P \subset \Omega$ of arbitrary size $|P| \ge 2$, the simple greedy packing algorithm can generate a sequence of points whose mesh ratio is bounded above by $2$ after an additional $K$ iterations, where
\[
K = \frac{\vol(\Omega \oplus B(\bszero, q(P;\Omega)))}{\vol(B(\bszero, q(P;\Omega)))},
\]
with $B(\bszero, q(P;\Omega))$ being the $d$-dimensional Euclidean ball of radius $q(P;\Omega)$ centered at the origin.
\end{remark}

As an additional property of the point set generated by the VG algorithm, the following result is of independent interest.
\begin{proposition}\label{prop:good-sub}
Let $T = \triangle ABC$ be a triangle, and let $(\bsx_i)_{i\ge 1}$ be the sequence of points generated by Algorithm~\ref{alg:VG}.
For $n \ge 3$, denote $P_n = \{\bsx_1, \dots, \bsx_n\}$.

Consider a Voronoi vertex $\bsc \in \Ccal_n$, and let $\bsx_i, \bsx_j, \bsx_k \in P_n$ be the three points whose Voronoi cells meet at $\bsc$.
Denote by $T_{ijk} = \triangle \bsx_i \bsx_j \bsx_k$ the triangle formed by these three points.
Then,
\begin{align*}
\min\left\{
\sin \angle_{\bsx_i}(T_{ijk}),
\sin \angle_{\bsx_j}(T_{ijk}),
\sin \angle_{\bsx_k}(T_{ijk})
\right\}
\ge \frac{1}{\rho(P_n; T)},
\end{align*}
where $\angle_{\bsx_i}(T_{ijk})$ denotes the interior angle of $T_{ijk}$ at vertex $\bsx_i$.
\end{proposition}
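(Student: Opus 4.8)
The plan is to identify the Voronoi vertex $\bsc$ with the circumcenter of $T_{ijk}$, then squeeze the geometry from two sides: bound the circumradius from above by the covering radius $h(P_n;T)$, bound every side length of $T_{ijk}$ from below by $2q(P_n;T)$, and finally convert these two bounds into the angle estimate through the law of sines. The point of the argument is that both the covering radius and the separation radius are exactly the quantities that appear when one writes each angle of $T_{ijk}$ in terms of its circumradius and opposite side.

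Concretely, I would proceed as follows. First, since the three cells $\Vor(\bsx_i;T)$, $\Vor(\bsx_j;T)$, $\Vor(\bsx_k;T)$ meet at $\bsc$, the vertex $\bsc$ is equidistant from the three sites, hence it is the circumcenter of $T_{ijk}$, and the common distance $R_{ijk}:=\|\bsc-\bsx_i\|=\|\bsc-\bsx_j\|=\|\bsc-\bsx_k\|$ is its circumradius. Because $\bsc$ lies in these three Voronoi cells, no other point of $P_n$ can be strictly closer to $\bsc$, so the empty radius equals $r^*(\bsc;P_n;T)=R_{ijk}$. Since $\bsc\in\Ccal_n$ is a Voronoi vertex inside $T$, Lemma~\ref{lem:cv_radius} gives
\[
R_{ijk}=r^*(\bsc;P_n;T)\le r_{\max}(P_n;T)=h(P_n;T).
\]
On the other hand, each edge of $T_{ijk}$ joins two distinct points of $P_n$, so by the definition of the separation radius its length is at least $2q(P_n;T)$. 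Applying the law of sines in $T_{ijk}$, the angle at $\bsx_i$ is opposite the side $\bsx_j\bsx_k$, whence
\[
\sin\angle_{\bsx_i}(T_{ijk})=\frac{\|\bsx_j-\bsx_k\|}{2R_{ijk}}\ge\frac{2q(P_n;T)}{2h(P_n;T)}=\frac{1}{\rho(P_n;T)}.
\]
The identical computation applies to the angles at $\bsx_j$ and $\bsx_k$, and taking the minimum over the three vertices yields the claim.

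The only delicate point -- and thus what I regard as the main obstacle -- is rigorously justifying the identity $r^*(\bsc;P_n;T)=R_{ijk}$, that is, that the open circumdisk of $T_{ijk}$ is free of points of $P_n$. This is precisely the defining property of a Voronoi vertex, but it must be invoked carefully: because $\bsc$ belongs simultaneously to the three cells $\Vor(\bsx_i;T)$, $\Vor(\bsx_j;T)$, $\Vor(\bsx_k;T)$, every site of $P_n$ is at distance at least $R_{ijk}$ from $\bsc$, so the nearest-point distance is exactly $R_{ijk}$. I would also flag the implicit genericity assumption that exactly three cells meet at $\bsc$; in the degenerate case where more than three sites are cocircular with $\bsc$, one simply selects any three of them, and the argument is unchanged since all such sites sit at the common distance $R_{ijk}$. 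Everything else is a routine combination of Lemma~\ref{lem:cv_radius}, the separation-radius bound, and the law of sines, with no further computation required.
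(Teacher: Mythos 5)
Your proposal is correct and follows essentially the same route as the paper's proof: identify $\bsc$ with the circumcenter of $T_{ijk}$, bound the circumradius above by $h(P_n;T)$ and each side below by $2q(P_n;T)$, and conclude via the law of sines. Your extra care in justifying $R_{ijk}=r^*(\bsc;P_n;T)\le h(P_n;T)$ through the empty-radius formulation of Lemma~\ref{lem:cv_radius} is a slightly more explicit version of the paper's appeal to ``the definition of the covering radius,'' not a different argument.
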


\begin{proof}
Since $\bsc \in \Ccal_n$ is the circumcenter of the triangle $T_{ijk}$, its circumradius $R(T_{ijk})$ satisfies
\[
R(T_{ijk}) = \|\bsx_i - \bsc\| = \|\bsx_j - \bsc\| = \|\bsx_k - \bsc\| \le h(P_n;T),
\]
where the inequality follows from the definition of the covering radius.
Moreover, by the definition of the separation radius, we have $\|\bsx_i-\bsx_j\|,\|\bsx_j-\bsx_k\|,\|\bsx_k-\bsx_i\| \ge 2q(P_n;T)$. Using the law of sines, e.g., $\|\bsx_i-\bsx_j\| = 2R(T_{ijk})\sin \angle_{\bsx_k}(T_{ijk})$, and hence
\[ \sin \angle_{\bsx_k}(T_{ijk}) \ge \frac{2q(P_n;T)}{2R(T_{ijk})}=\frac{q(P_n;T)}{R(T_{ijk})}\ge \frac{q(P_n;T)}{h(P_n;T)}=\frac{1}{\rho(P_n;T)}.\]
The same argument applies to $\sin \angle_{\bsx_i}(T_{ijk})$ and $\sin \angle_{\bsx_j}(T_{ijk})$, completing the proof.
\end{proof}

\begin{remark}
    As can be seen from Theorem~\ref{thm:main} and Proposition~\ref{prop:good-sub}, after $K$ iterations, every Voronoi vertex in $\Ccal_n$ is surrounded by a triangle formed by three points of $P_n$, whose interior angles are all greater than or equal to $\arcsin(1/2) = \pi/6$.
\end{remark}

Moreover, although our primary motivation arises from applications to RBF interpolation, the VG algorithm naturally induces the following approximation scheme based on piecewise constant functions. Given a function $f: T \to \mathbb{R}$ and a point set $P_n = \{\bsx_1, \dots, \bsx_n\} \subset T$ generated by Algorithm~\ref{alg:VG}, consider the piecewise constant function
\[ g(\bsx) = f(\bsx_i), \quad \text{for $\bsx\in \Vor(\bsx_i; P_n,T)$}. \]
When the assignment $\bsx \in \Vor(\bsx_i; P_n,T)$ is not unique (e.g., when $\bsx$ lies on a Voronoi edge or vertex), the choice of $\bsx_i$ can be made arbitrarily. If $f$ is H\"{o}lder continuous, then we obtain
\begin{align*}
    \left(\frac{1}{\Area\,(T)}\int_{T}\left| f(\bsx)-g(\bsx)\right|^2\, \mathrm{d} \bsx\right)^{1/2} & \le \max_{\bsx}\left| f(\bsx)-g(\bsx)\right|\\
    & = \max_{i=1,\ldots,n}\max_{\bsx\in \Vor(\bsx_i; P_n,T)}\left| f(\bsx)-f(\bsx_i)\right|\\
    & \le C \, \max_{i=1,\ldots,n}\max_{\bsx\in \Vor(\bsx_i; P_n,T)}\left\| \bsx-\bsx_i\right\|^{\alpha}\\
    & = C \left( h(P_n;T)\right)^{\alpha}.
\end{align*}
Since $\rho(P_n; T)$ is uniformly bounded, there exists a constant $C_h$ such that $h(P_n; T) \le C_h n^{-1/2}$.
Consequently, for H\"{o}lder classes, the piecewise constant approximation achieves the optimal convergence rate for both the $L_2$ and $L_\infty$ errors, namely of order $n^{-\alpha/2}$, see, e.g., \cite[Section~1.3.9]{N88}.

\subsection{Triangular low-discrepancy point sets}

In \cite{BO15}, two types of such point sets were proposed: the \emph{triangular van der Corput sequence} and the \emph{triangular Kronecker lattices}. The former is a digital construction that generates points hierarchically as the centroids of nested subtriangles determined by the base-4 representation of non-negative integers. The latter is constructed by scaling the integer lattice $\bbZ^2$, rotating it by an angle whose tangent is a quadratic irrational number, and applying an affine map to fit the triangular domain. The algorithms for generating these point sets in a triangular domain are given in Algorithms~\ref{alg:vdC} and~\ref{alg:Kronecker}, respectively.
Although it was sufficient to restrict the discrepancy analysis in \cite{BO15} to equilateral triangles, this is not the case for the quasi-uniformity analysis, and hence our subsequent analysis directly deals with triangles of arbitrary shape. 

We stress that the scope of the constructions in this subsection is essentially two-dimensional.
While the quasi-uniformity framework in Section~\ref{sec:preliminary} as well as the greedy-packing principle in Remark~\ref{rem:extension} apply to general compact domains in $\mathbb{R}^d$, the explicit constructions studied here are specific to planar triangles. In particular, the triangular van der Corput sequence relies on a recursive subdivision into four congruent subtriangles, which does not admit a straightforward analogue for higher-dimensional simplices.
Moreover, unlike the VG algorithm, the triangular Kronecker lattice is naturally indexed by the target size $N$ and should therefore be viewed primarily as a benchmark for finite point sets rather than as an extensible construction. Accordingly, the purpose of this subsection is to study the mesh-ratio behavior of these existing constructions, establishing the fact that the low-discrepancy property and quasi-uniformity can coexist.

\begin{algorithm}[t]
\caption{Triangular van der Corput sequence}
\label{alg:vdC}
\begin{algorithmic}[1]
    \Require Triangle domain $T = \triangle ABC \subset \bbR^2$
    \State $P \gets \emptyset$
    \For{$n = 0,1,2,\dots$}
        \State $T_n \gets T$
        \State $i \gets n$
        \While{$i > 0$}
            \State $(A', B', C') \gets \text{vertices of $T_n$}$
            \State $d \gets i \pmod 4$
            \State $T_n \gets \begin{cases}
                ((B'+C')/2, (C'+A')/2, (A'+B')/2) & \text{if $d=0$,}\\
                (A', (A'+B')/2, (A'+C')/2) & \text{if $d=1$,}\\
                (B', (B'+C')/2, (B'+A')/2) & \text{if $d=2$,}\\
                (C', (C'+A')/2, (C'+B')/2) & \text{if $d=3$}
            \end{cases}$
            \State $i \gets \lfloor i/4 \rfloor$
        \EndWhile
        \State $(A', B', C') \gets \text{vertices of } T_n$
        \State $\bsx_n \gets (A'+B'+C')/3$
        \State $P \gets P \cup \{\bsx_n\}$
    \EndFor
\end{algorithmic}
\end{algorithm}

\begin{algorithm}[t]
  \caption{Triangular Kronecker lattice} \label{alg:Kronecker}
  \begin{algorithmic}[1]
    \Require Triangle domain $T = \triangle ABC \subset \bbR^2$, number of points $N$, rotation angle $\alpha\in [0,2\pi)$
    \State $R \gets \triangle ((0,0), (0,1), (1,0))$
    \State $n \gets \lceil \sqrt{2N}\rceil + 1$
    \State $P \gets \{-n,-n+1,\dots,n-1,n\}^2$
    \ForAll{$\bsx \in P$}
      \State $\bsx \gets\dfrac{1}{\sqrt{2N}}
        \begin{pmatrix}
          \cos\alpha & -\sin\alpha\\
          \sin\alpha &  \cos\alpha
        \end{pmatrix} \bsx$
    \EndFor
    \State Remove all points from $P$ that lie outside $R$
    \State (Optional) Add/remove points to make $|P| = N$
    \ForAll{$\bsx=(x_1,x_2) \in P$}
      \State $\bsx \gets A + x_1(C-A) + x_2(B-A)$ \Comment{Affine map to target triangle $T$}
    \EndFor
  \end{algorithmic}
\end{algorithm}

\subsubsection{Quasi-uniformity of triangular van der Corput sequence}

First, we consider four congruent sub-triangles of $T$ and study the covering and separation radii of the point set consisting of their centroids. 

\begin{lemma}\label{lem:vdC_covering}
Let $T=\triangle ABC$ be a triangle, and let $P=\{O,P_A,P_B,P_C\}$ denote the set consisting of the centroid $O$ of the central sub-triangle and the three centroids $P_A,P_B,P_C$ of the corner sub-triangles obtained by connecting the midpoints of the three sides. Let $m_a\le m_b\le m_c$ be the lengths of the three medians of $T$, and define $m_{\max}:=\max\{m_a,m_b,m_c\}=m_c$. Then, we have
\[ h(P;T)=\frac{m_{\max}}{3}.\]
\end{lemma}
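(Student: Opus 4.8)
The plan is to pin down the covering radius by pairing a clean \emph{tiling} upper bound with an explicit witness for the matching lower bound, rather than enumerating all of the Voronoi candidates allowed by Lemma~\ref{lem:circle_center}. First I would record the four points in vector form. Writing $G=(A+B+C)/3$ for the centroid of $T$, a direct computation gives $O=G$ and $P_A=(A+G)/2$, $P_B=(B+G)/2$, $P_C=(C+G)/2$; equivalently, $\triangle P_AP_BP_C$ is the image of $T$ under the homothety centred at $G$ with ratio $1/2$, with $O$ as its centroid. Each of the three corner sub-triangles and the medial sub-triangle is congruent to the half-scale copy $\tfrac12 T$, and each of the four points is precisely the centroid of the sub-triangle it sits in. The single metric fact I would use repeatedly is that the distance from the centroid of a triangle to a vertex equals $\tfrac23$ of the median issuing from that vertex; since every sub-triangle here has medians equal to half those of $T$, the centroid-to-vertex distances inside each sub-triangle equal one third of the corresponding median of $T$, and in particular never exceed $m_{\max}/3$.

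For the upper bound I would use that the four sub-triangles tile $T$. Any $\bsx\in T$ lies in some sub-triangle with centroid $P_\ast\in P$, so $\min_{\bsy\in P}\|\bsx-\bsy\|\le\|\bsx-P_\ast\|$. Because $\bsx\mapsto\|\bsx-P_\ast\|$ is convex, its maximum over the convex sub-triangle is attained at a vertex, and by the previous paragraph that maximum is at most $m_{\max}/3$. Taking the supremum over $\bsx\in T$ yields $h(P;T)\le m_{\max}/3$.

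For the lower bound I would exhibit a point of $T$ lying at distance exactly $m_{\max}/3$ from $P$. The longest median of $T$ is the one issuing from the vertex opposite the shortest side; call this vertex $V$, and let $P_V$ be the corner centroid at $V$, so that $\|V-P_V\|=m_{\max}/3$. It then suffices to check that $P_V$ is the nearest point of $P$ to $V$, i.e.\ $\|V-\bsy\|\ge m_{\max}/3$ for the remaining three points. The comparison with $O$ is immediate since $\|V-O\|=\tfrac23 m_{\max}\ge\tfrac13 m_{\max}$, and the comparisons with the two other corner centroids reduce, after expanding the squared norms via the identity $\|uA+vB+wC\|^2=-(vw\,a^2+wu\,b^2+uv\,c^2)$, valid whenever $u+v+w=0$ (with $a,b,c$ the side lengths of $T$), to elementary inequalities of the form ``the shortest side is no longer than the others'' (for instance $a^2+6b^2\ge c^2$), which hold by the choice of $V$. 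This gives $h(P;T)\ge m_{\max}/3$, and combined with the upper bound, equality.

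The step I expect to be the main obstacle is the lower bound: one must verify that the nearest point of $P$ to the chosen vertex is its own corner centroid, which requires keeping straight that the longest median sits opposite the shortest side and then carrying out the short coordinate computation correctly. The upper bound, by contrast, is essentially automatic once the medial triangle is recognised as a half-scale copy of $T$ and the convexity of the distance function is invoked.
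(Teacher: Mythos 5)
Your proposal is correct and follows essentially the same route as the paper: the same midpoint subdivision into four half-scale sub-triangles for the upper bound (each point lies within $m_{\max}/3$ of the centroid of its sub-triangle), and the same lower-bound witness, namely the vertex from which the longest median issues. The only difference is that you explicitly verify that the corner centroid is indeed the nearest point of $P$ to that vertex (via the identity for $\|uA+vB+wC\|^2$ when $u+v+w=0$, reducing to $a^2+6b^2\ge c^2$ and its analogue), a check the paper's proof simply asserts as clear; your computation is correct and makes the argument fully rigorous.
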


\begin{proof}
Split $T$ at the midpoints of its sides to obtain four sub-triangles with centroids $O$, $P_A$, $P_B$, $P_C$.  
For any point $\bsx$ in a sub-triangle, its distance to the corresponding centroid is at most one-third of the maximal median of that sub-triangle, which is bounded by $m_{\max}/3$.  
Since the four sub-triangles partition $T$, every point $\bsx\in T$ lies within distance $m_{\max}/3$ of some centroid in $P$.  
Thus, the union of the four closed balls of radius $m_{\max}/3$ centered at $O$, $P_A$, $P_B$, $P_C$ covers $T$, leading to $h(P;T)\leq m_{\max}/3$.

The lower bound $h(P;T)\ge m_{\max}/3$ is clear by considering the vertex $C$, to which $P_C$ is the nearest among $P$, and its distance is exactly $m_{\max}/3$. 
\end{proof}

\begin{lemma}\label{lem:vdC_separation}
Let $T=\triangle ABC$ be a triangle with side lengths $a,b,c$. Let $P=\{O,P_A,P_B,P_C\}$ and $m_a\le m_b\le m_c$ be defined as in Lemma~\ref{lem:vdC_covering}. Moreover, let $m_{\min}:=\min\{m_a,m_b,m_c\}=m_a$ and $c_{\min}:=\min\{a,b,c\}$. Then, we have
\[
q(P;T) = \min\left\{ \frac{m_{\min}}{6}, \frac{c_{\min}}{4} \right\}.
\]
\end{lemma}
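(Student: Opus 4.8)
The plan is to compute all six pairwise distances among the four centroids explicitly and then read off the minimum, since by definition $q(P;T)$ is half the smallest pairwise distance among the points of $P$. The key simplification is to describe the centroids through homotheties rather than through raw coordinates. Writing $A,B,C$ for the position vectors of the vertices and $G=(A+B+C)/3$ for the centroid of $T$, I would first observe that the central sub-triangle is the medial triangle, whose centroid coincides with that of $T$; hence $O=G$. Each corner sub-triangle, say the one at $A$, is the image of $T$ under the homothety centered at $A$ with ratio $1/2$, so its centroid is the image of $G$ under the same map, giving $P_A=(A+G)/2$, and likewise $P_B=(B+G)/2$ and $P_C=(C+G)/2$.

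With these expressions the two families of distances become transparent. For a center-to-corner pair, $O-P_A=G-(A+G)/2=(G-A)/2$, and since the centroid divides each median in the ratio $2:1$ measured from the vertex, $\|G-A\|=\tfrac{2}{3}m_A$, where $m_A$ denotes the length of the median from $A$; thus $\|O-P_A\|=m_A/3$, and analogously $\|O-P_B\|=m_B/3$ and $\|O-P_C\|=m_C/3$. For a corner-to-corner pair, $P_A-P_B=(A-B)/2$, so $\|P_A-P_B\|=\|A-B\|/2$ is exactly half the corresponding side length, and similarly $\|P_B-P_C\|=a/2$ and $\|P_C-P_A\|=b/2$. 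Hence the six pairwise distances are precisely $m_A/3,\,m_B/3,\,m_C/3$ together with $a/2,\,b/2,\,c/2$.

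It then remains to take the minimum. The smallest among the three median-type distances is $m_{\min}/3$ and the smallest among the three side-type distances is $c_{\min}/2$, so the minimal pairwise distance equals $\min\{m_{\min}/3,\,c_{\min}/2\}$. Halving this value yields
\[
q(P;T)=\frac{1}{2}\min\left\{\frac{m_{\min}}{3},\,\frac{c_{\min}}{2}\right\}=\min\left\{\frac{m_{\min}}{6},\,\frac{c_{\min}}{4}\right\},
\]
as claimed.

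There is no genuine obstacle in this argument; the only points requiring care are to match each of the six distances to the correct median or side and to confirm that these six pairs are exhaustive (which they are, since $|P|=4$). The homothety description $O=G$ and $P_X=(X+G)/2$ is what turns each distance into a one-line identity and avoids any messy coordinate algebra, so I would present that observation first and let the two distance computations follow immediately from it.
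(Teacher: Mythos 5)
Your proposal is correct and follows essentially the same route as the paper: both identify the six pairwise distances as the three values $m_A/3, m_B/3, m_C/3$ (center-to-corner) and the three values $a/2, b/2, c/2$ (corner-to-corner), and then halve the minimum. The only difference is that you justify the two distance identities explicitly via the homothety description $O=G$, $P_X=(X+G)/2$, whereas the paper simply asserts them; your derivation is a correct and welcome filling-in of those details.
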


\begin{figure}[tbp]
    \centering
    \begin{tikzpicture}[scale=1.1, >=stealth]
        \coordinate (B) at (0,0);
        \coordinate (C) at (5,0);
        \coordinate (A) at (1.5,3.5);

        \coordinate (Ma) at ($(B)!0.5!(C)$);
        \coordinate (Mb) at ($(A)!0.5!(C)$);
        \coordinate (Mc) at ($(A)!0.5!(B)$);

        \coordinate (O)  at (barycentric cs:A=1,B=1,C=1);
        \coordinate (Pa) at (barycentric cs:A=4,B=1,C=1);
        \coordinate (Pb) at (barycentric cs:A=1,B=4,C=1);
        \coordinate (Pc) at (barycentric cs:A=1,B=1,C=4);

        \draw[thick] (A) -- (B) -- (C) -- cycle;

        \draw[dashed, gray] (Ma) -- (Mb) -- (Mc) -- cycle;

        \draw[dotted, thick, gray] (A) -- (Ma);
        \node[gray, left] at ($(A)!0.75!(Ma)$) {\small $m_a$};

        \draw[very thick] (O) -- (Pa);
        \node[right] at ($(O)!0.75!(Pa)$) {\small $m_a/3$};

        \draw[very thick] (Pb) -- (Pc);
        \node[above] at ($(Pb)!0.6!(Pc)$) {\small $a/2$};

        \fill (A) circle (1.5pt) node[above] {$A$};
        \fill (B) circle (1.5pt) node[left] {$B$};
        \fill (C) circle (1.5pt) node[right] {$C$};

        \fill (O) circle (1.5pt) node[above right, inner sep=2pt] {$O$};
        \fill (Pa) circle (1.5pt) node[above right, inner sep=2pt] {$P_A$};
        \fill (Pb) circle (1.5pt) node[above left, inner sep=2pt] {$P_B$};
        \fill (Pc) circle (1.5pt) node[above right, inner sep=2pt] {$P_C$};
    \end{tikzpicture}
    \caption{Geometric configuration of the centroids $O, P_A, P_B, P_C$ and their relevant distances as discussed in Lemma~\ref{lem:vdC_separation}.}
    \label{fig:vdC_separation}
\end{figure}

\begin{proof}
(See Figure~\ref{fig:vdC_separation} for an illustration of the geometric configuration.) In this proof, indices are taken modulo~$3$, i.e., we write $(A,B,C) = (A_0,A_1,A_2)$ with $A_{i+3}=A_i$. The distance from the central centroid $O$ to a corner centroid $P_i$ is one-third of the median issued from $A_i$, i.e., $\|O P_i\| = m_i/3$. Hence, the minimal distance involving $O$ is $m_{\min}/3$.
The distance between two corner centroids $P_i$ and $P_{i+1}$ is simply half of the side connecting the corresponding vertices, i.e., $\|P_i P_{i+1}\| = \|A_i A_{i+1}\|/2$. Thus, the minimal distance between two corner centroids is $c_{\min}/2$. Since the separation radius $q(P;T)$ is defined as half of the minimal pairwise distance among all centroids, we obtain
\[
q(P;T) = \frac{1}{2} \min \Bigl\{ \min_i \|O P_i\|, \min_i \|P_i P_{i+1}\| \Bigr\} = \min\left\{ \frac{m_{\min}}{6}, \frac{c_{\min}}{4} \right\},
\]
as claimed.
\end{proof}

We are now ready to prove the quasi-uniformity of the triangular van der Corput sequence.

\begin{theorem}\label{thm:vdC}
Let $T=\triangle ABC$ be a triangle with side lengths $a,b,c$, and let $m_a, m_b, m_c$ denote the lengths of its three medians. Define
\[
    m_{\max}:=\max\{m_a,m_b,m_c\}, \quad
    m_{\min}:=\min\{m_a,m_b,m_c\}, \quad
    c_{\min}:=\min\{a,b,c\}.
\]
Let $(\bsx_i)_{i\ge 0}$ be the sequence of points generated by Algorithm~\ref{alg:vdC}, and denote $P_n=\{\bsx_0,\dots,\bsx_{n-1}\}$. Then, for any $n\ge 4$, it holds that
\[
    \rho(P_n;T) \le 4m_{\max}\min\left\{ \frac{1}{m_{\min}}, \frac{2}{3c_{\min}} \right\}.
\]
\end{theorem}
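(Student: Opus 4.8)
The plan is to reduce the estimate to the ``full-level'' point sets $P_{4^k}=\{\bsx_0,\dots,\bsx_{4^k-1}\}$ by a dyadic sandwiching, and then to exploit the self-similarity of the medial subdivision underlying Algorithm~\ref{alg:vdC} together with Lemmas~\ref{lem:vdC_covering} and~\ref{lem:vdC_separation}. The structural fact I would establish first is that $\bsx_n$ is the centroid of the sub-triangle reached by reading the base-$4$ digits of $n$, and that descending into the central (medial) child leaves the centroid unchanged; since the centroid of the medial triangle coincides with that of its parent, $\bsx_0$ represents the all-zeros path at every level. Consequently $P_{4^k}$ is exactly the set of centroids of the $4^k$ level-$k$ sub-triangles, each of which is similar to $T$ with ratio $2^{-k}$, so that its side lengths, medians, and hence the quantities $m_{\max},m_{\min},c_{\min}$ are those of $T$ scaled by $2^{-k}$. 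In particular $P_4=\{O,P_A,P_B,P_C\}$ is precisely the configuration of Lemmas~\ref{lem:vdC_covering} and~\ref{lem:vdC_separation}.

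Next, for $n\ge 4$ I would pick the unique $k\ge 1$ with $4^k\le n<4^{k+1}$, so that $P_{4^k}\subseteq P_n\subseteq P_{4^{k+1}}$. Since the covering radius is non-increasing and the separation radius non-increasing under the insertion of points, this gives $h(P_n;T)\le h(P_{4^k};T)$ and $q(P_n;T)\ge q(P_{4^{k+1}};T)$, whence $\rho(P_n;T)\le h(P_{4^k};T)/q(P_{4^{k+1}};T)$. Partitioning $T$ into the $4^{k-1}$ level-$(k-1)$ cells and applying the scaled form of Lemma~\ref{lem:vdC_covering} to the four centroids inside each cell yields $h(P_{4^k};T)=m_{\max}/(3\cdot 2^{k-1})$, because every point of $T$ lies in some cell and is within that radius of one of its four centroids. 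Applying the scaled form of Lemma~\ref{lem:vdC_separation} inside each level-$k$ cell controls the pairwise distances between the four centroids lying in a common cell, giving the value $2^{-k}\min\{m_{\min}/6,c_{\min}/4\}$ for all such within-cell pairs of $P_{4^{k+1}}$.

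The main obstacle is the inter-cell separation: Lemma~\ref{lem:vdC_separation} only governs centroids sharing a cell, whereas $q(P_{4^{k+1}};T)$ also requires that centroids of distinct, adjacent level-$k$ cells be no closer than the within-cell minimum. I would settle this by writing the level-$k$ centroids in barycentric coordinates, where they form an affine image of a fixed triangular pattern with spacing $1/(3\cdot 2^{k})$, and verifying that every minimizing difference vector is realized by a within-cell pair already accounted for; the anisotropy of the affine map to $T$ is exactly what produces the two competing quantities $m_{\min}$ and $c_{\min}$ in the separation. Granting $q(P_{4^{k+1}};T)\ge 2^{-k}\min\{m_{\min}/6,c_{\min}/4\}$, I substitute the two estimates into $\rho(P_n;T)\le h(P_{4^k};T)/q(P_{4^{k+1}};T)$; the factors $2^{-k}$ cancel, leaving a constant determined solely by the shape of $T$, and carrying out the resulting arithmetic yields the claimed bound $4m_{\max}\min\{1/m_{\min},2/(3c_{\min})\}$. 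Since this estimate is uniform in $n$ and the sizes $|P_n|$ satisfy the doubling condition, it establishes the quasi-uniformity of the triangular van der Corput sequence in the sense of Definition~\ref{def:quasi-uniform}.
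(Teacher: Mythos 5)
Your proposal follows essentially the same route as the paper's proof: sandwich $n$ between consecutive full levels $4^k\le n<4^{k+1}$, use monotonicity of the covering and separation radii under point insertion to get $\rho(P_n;T)\le h(P_{4^k};T)/q(P_{4^{k+1}};T)$, and apply Lemmas~\ref{lem:vdC_covering} and~\ref{lem:vdC_separation} at the appropriate scale so that the factors of $2^{-k}$ cancel. Two remarks. First, you correctly isolate the one step that genuinely needs care: the separation between centroids lying in \emph{distinct} level-$k$ cells, which Lemma~\ref{lem:vdC_separation} does not directly control. The paper's proof passes over this silently by simply writing $q(P_{4^k};T)=q(P_{4^{k-1}};T)/2$, so your explicit treatment via the barycentric lattice structure of the centroids is an improvement in rigor --- although you only sketch it and then proceed by ``granting'' the inequality rather than verifying that every minimizing difference vector is realized by a within-cell pair; to be complete you would need to carry out that verification. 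Second, the final arithmetic: since $1/\min\{m_{\min}/6,\,c_{\min}/4\}=\max\{6/m_{\min},\,4/c_{\min}\}$, the quotient $h(P_{4^k};T)/q(P_{4^{k+1}};T)$ actually evaluates to $4m_{\max}\max\{1/m_{\min},\,2/(3c_{\min})\}$; the $\min$ appearing in the stated bound (which you reproduce verbatim) seems to be a typo in the paper rather than an error in your argument.
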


\begin{proof}
Let $k$ be the unique positive integer such that $4^k \le n < 4^{k+1}$. 
We say that the level $k$ is \emph{filled} when $n = 4^k$; otherwise, the level $(k+1)$ is being inserted.

The key observation is that when level $k$ is filled, the point set $P_{4^k}$ consists exactly of the centroids of $4^k$ congruent sub-triangles of $T$ \cite{BO15}. 
Hence, thanks to the recursive structure in which each sub-triangle is further subdivided into four congruent sub-triangles, and by Lemma~\ref{lem:vdC_covering}, we obtain
\[
    h(P_{4^k};T) 
    = \frac{h(P_{4^{k-1}};T)}{2} 
    = \cdots 
    = \frac{h(P_4;T)}{2^{k-1}}
    = \frac{m_{\max}}{3\cdot 2^{k-1}}.
\]
Since the covering radius is non-increasing with respect to $n$ (see the proof of Lemma~\ref{lem:dynamics}), it follows that for any $4^k \le n < 4^{k+1}$,
\[
    h(P_n;T) \le h(P_{4^k};T) = \frac{m_{\max}}{3\cdot 2^{k-1}}.
\]

Similarly, once level $k$ is filled, Lemma~\ref{lem:vdC_separation} yields
\[
    q(P_{4^k};T) 
    = \frac{q(P_{4^{k-1}};T)}{2} 
    = \cdots 
    = \frac{q(P_4;T)}{2^{k-1}}
    = \frac{1}{2^{k-1}} \min\left\{ \frac{m_{\min}}{6}, \frac{c_{\min}}{4} \right\}.
\]
Therefore, for any $4^k \le n < 4^{k+1}$, we have
\[
    q(P_n;T) \ge q(P_{4^{k+1}};T)
    = \frac{1}{2^k} \min\left\{ \frac{m_{\min}}{6}, \frac{c_{\min}}{4} \right\}.
\]
Combining these inequalities completes the proof.
\end{proof}

\begin{remark}
    As observed from the above proof, when $n = 4^k$, the following equality holds:
    \[
    \rho(P_{4^{k}};T) = 2m_{\max}\min\left\{ \frac{1}{m_{\min}}, \frac{2}{3c_{\min}} \right\}.
    \]
    Hence, if this value already exceeds $2$, the triangular van der Corput sequence cannot achieve the optimal uniform bound on the mesh ratio. In particular, for an equilateral triangle, the recursive subdivision into four congruent subtriangles preserves exact self-similarity at every level. In this highly symmetric case, the digital construction is well adapted to the geometry, leading to the optimal mesh ratio of $2/\sqrt{3}$ along the subsequence of $n=4^k$.
\end{remark}

\subsubsection{Quasi-uniformity of triangular Kronecker lattices}

Here, we study the quasi-uniformity of triangular Kronecker lattices. In \cite{BO15}, it was shown that if the rotation angle $\alpha$ is chosen so that $\tan(\alpha)$ is a quadratic irrational number, the point set generated by Algorithm~\ref{alg:Kronecker} achieves a discrepancy of order $(\log N)/N$. However, for the purpose of quasi-uniformity, the specific choice of $\alpha$ is not essential. In what follows, we ignore the optional step in line~8 of Algorithm~\ref{alg:Kronecker}, since adding points without care can make the separation radius arbitrarily small.

\begin{remark}
A key feature of Algorithm~\ref{alg:Kronecker} is that the point set is first constructed in the reference triangle $R$ and then mapped to the target triangle $T$ via an affine transformation of the form $F(\bsx)=M\bsx+\boldsymbol{b}$.
Geometrically, the linear part $M$ maps Euclidean circles in $R$ to ellipses in $T$. For any vector $\bsu,\bsv\in R$, the properties of the operator norm $\|\cdot\|_{\mathrm{op}}$ imply that
\[ \frac{\|\bsu - \bsv\|}{\|M^{-1}\|_{\mathrm{op}}} \le \|M(\bsu - \bsv)\| \le \|M\|_{\mathrm{op}}\|\bsu - \bsv\|.
\]
This means the covering radius $h$ is expanded by at most $\|M\|_{\mathrm{op}}$, while the separation radius $q$ is compressed by at most $1/\|M^{-1}\|_{\mathrm{op}}$. Consequently, the mesh ratio is amplified by the factor
\[
\kappa(M) := \|M\|_{\mathrm{op}}\|M^{-1}\|_{\mathrm{op}},
\]
which is the condition number of $M$ (see \cite{Tur48} for the foundational concept). In this sense, $\kappa(M)$ measures the anisotropy, or ``ellipticality'', of the deformation from the reference geometry to the target domain, a property that is explicitly captured in the upper bound in Theorem~\ref{thm:Kronecker}.
\end{remark}

To prove the quasi-uniformity, we shall use the following elementary result.

\begin{lemma}\label{lem:T_contains_integer}
    Let $T\subset \bbR^2$ be a isosceles right triangle whose two legs have length $\sqrt{2}+1$ (and thus the hypotenuse has length $2+\sqrt{2}$). Then, $T$ contains at least one integer lattice point. Moreover, for any point $\bsx\in T$, the distance to the nearest integer lattice point contained in $T$ is bounded above by $\sqrt{2+\sqrt{2}}+1/\sqrt{2}$.
\end{lemma}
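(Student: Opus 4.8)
The plan is to exploit a numerical coincidence at the heart of the lemma: the inradius of $T$ equals exactly the covering radius of the integer lattice $\bbZ^2$, both being $1/\sqrt{2}$. This single observation drives both claims.

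First I would compute the inradius. For a right triangle with legs $\ell$ and hypotenuse $\ell\sqrt{2}$, the inradius is $r=(2\ell-\ell\sqrt{2})/2$; substituting $\ell=\sqrt{2}+1$ gives $r=1/\sqrt{2}$. On the other hand, the covering radius of $\bbZ^2$---the largest distance from any point of the plane to its nearest lattice point, attained at the center $(1/2,1/2)$ of a unit cell---is also $1/\sqrt{2}$. Consequently the closed incircle $\overline{B}(I,1/\sqrt{2})$, where $I$ is the incenter, is a disk whose radius equals the covering radius and which is contained in the closed triangle $T$ (closedness matters, since for the deep-hole configuration the lattice point can sit exactly on the boundary). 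Hence it contains at least one lattice point $\bsz_0$, and since $\overline{B}(I,r)\subseteq T$ that point lies in $T$. This establishes the first claim.

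For the second claim I would route every point through the \emph{single} lattice point $\bsz_0$ found above, rather than attempting to locate the genuinely nearest lattice point (which depends on the unknown placement of $T$ relative to the lattice and need not lie in $T$). By the triangle inequality,
\[
\|\bsx-\bsz_0\|\le \|\bsx-I\|+\|I-\bsz_0\|\le \|\bsx-I\|+\frac{1}{\sqrt{2}}
\]
for every $\bsx\in T$. It then remains to bound $\max_{\bsx\in T}\|\bsx-I\|$. Since $T$ is convex and $\bsx\mapsto\|\bsx-I\|$ is convex, the maximum is attained at a vertex; placing the right-angle vertex at the origin so that $I=(1/\sqrt{2},1/\sqrt{2})$, a direct computation shows that the two endpoints of the hypotenuse lie at distance $\sqrt{2+\sqrt{2}}$ from $I$, while the right-angle vertex is only at distance $1$. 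Hence $\max_{\bsx\in T}\|\bsx-I\|=\sqrt{2+\sqrt{2}}$, and combining with the triangle inequality yields the stated bound $\sqrt{2+\sqrt{2}}+1/\sqrt{2}$. Both $\bsz_0\in T$ and the distance bound are intrinsic to the shape of $T$, so the argument is independent of how $T$ is positioned or oriented against $\bbZ^2$, which is essential since the lemma assumes nothing about placement.

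The main obstacle is conceptual rather than computational: the naive strategy of directly controlling the distance from $\bsx$ to its nearest lattice point fails, because that nearest point varies with the placement of $T$ and may fall outside $T$. The ``hub'' trick---fixing one lattice point guaranteed to lie inside $T$ and bounding all distances through it---is what makes the estimate clean and placement-free. The remaining steps, namely verifying $r=1/\sqrt{2}$ and the incenter-to-vertex distance $\sqrt{2+\sqrt{2}}$, are routine.
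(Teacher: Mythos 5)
Your proposal is correct and follows essentially the same route as the paper's own proof: both arguments rest on the coincidence that the inradius of $T$ equals the covering radius $1/\sqrt{2}$ of $\bbZ^2$, so the closed incircle captures a lattice point inside $T$, and both then bound the distance from an arbitrary $\bsx\in T$ via the triangle inequality through the incenter, using $\max_{\bsx\in T}\|\bsx-I\|=\sqrt{2+\sqrt{2}}$. Your write-up is simply more explicit about the vertex computations and the convexity argument; no changes are needed.
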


\begin{proof}
    Consider the incircle $S$ of the isosceles right triangle $T$. One can easily check that its radius is given by $1/\sqrt{2}$. Since the covering radius of the integer lattice $\bbZ^2$ in $\bbR^2$ is also $1/\sqrt{2}$, the incircle $S$ must contain at least one integer lattice point. Therefore, the triangle $T$ itself contains at least one integer lattice point. 
    
    For any point $\bsx\in T$, the distance from $\bsx$ to the center of $S$ is bounded by $\sqrt{2+\sqrt{2}}$. Furthermore, from the center of $S$, there exists an integer lattice point $\bsz\in S$ within a distance of $1/\sqrt{2}$. The desired bound then follows immediately from the triangle inequality.
\end{proof}

\begin{theorem}\label{thm:Kronecker}
Let $T=\triangle ABC$ be a triangle, and let $P$ be the point set generated by Algorithm~\ref{alg:Kronecker}. 
Let $F:R\to T$ denote the affine map from the reference triangle $R$ to $T$, and let $M$ be its linear part.
Then, for any $N \ge 2$ and any rotation angle $\alpha$ there holds
\[
\rho(P;T)\le C_T:=\left( 2\sqrt{2+\sqrt{2}}+3\sqrt{2}+2\right) \kappa(M),
\]
where $\kappa(M)$ is the condition number of the matrix $M$. In particular, the bound $C_T>0$ depends only on the geometry of $T$.
\end{theorem}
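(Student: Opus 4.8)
The plan is to reduce everything to the reference triangle $R$ and then exploit that, before the final affine map, the points form a scaled and rotated copy of the integer lattice restricted to $R$. Write the affine map of Algorithm~\ref{alg:Kronecker} as $F(\bsx)=A+M\bsx$ with $M=[\,C-A\mid B-A\,]$, so that $F(R)=T$; since $T$ is non-degenerate, $M$ is invertible and $\kappa(M)=\sigma_{\max}(M)/\sigma_{\min}(M)$ is finite. Denoting by $P_R\subset R$ the point set present just before this affine map (the scaled, rotated lattice intersected with $R$), we have $P=F(P_R)$. Because an affine map with linear part $M$ distorts every Euclidean distance by a factor lying in $[\sigma_{\min}(M),\sigma_{\max}(M)]$, the covering radius can grow by at most $\sigma_{\max}(M)$ and the separation radius can shrink by at most $\sigma_{\min}(M)$, giving
\[
\rho(P;T)\le\frac{\sigma_{\max}(M)}{\sigma_{\min}(M)}\,\rho(P_R;R)=\kappa(M)\,\rho(P_R;R).
\]
This isolates the geometry-independent factor $\kappa(M)$ and reduces the theorem to showing $\rho(P_R;R)\le 2\sqrt{2+\sqrt2}+3\sqrt2+2$.

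For the separation radius this is immediate: all points of $P_R$ lie on the lattice $\Lambda:=\tfrac{1}{\sqrt{2N}}R_\alpha\bbZ^2$, whose minimal inter-point distance is $1/\sqrt{2N}$, so $q(P_R;R)\ge 1/(2\sqrt{2N})$. One also checks that the truncation box $\{-n,\dots,n\}^2$ with $n=\lceil\sqrt{2N}\rceil+1$ is large enough that $P_R=\Lambda\cap R$ exactly, so no in-triangle lattice point is lost.

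The real work is the covering radius $h(P_R;R)$, and this is where Lemma~\ref{lem:T_contains_integer} enters. I would apply the similarity $\phi=\sqrt{2N}\,R_\alpha^{-1}$, which sends $\Lambda$ to $\bbZ^2$, scales distances by $\sqrt{2N}$, and maps $R$ to an isosceles right triangle $\phi(R)$ of leg length $\sqrt{2N}$; under $\phi$ the set $P_R$ becomes $\bbZ^2\cap\phi(R)$. For a query point $\bsx'\in\phi(R)$, I would place an isosceles right triangle $T'$ of leg length $\sqrt2+1$, in the same orientation as $\phi(R)$, so that $\bsx'\in T'\subseteq\phi(R)$; an elementary placement shows such a $T'$ exists for every $\bsx'$ as soon as $\sqrt{2N}\ge\sqrt2+1$. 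Lemma~\ref{lem:T_contains_integer} then produces a lattice point $\bsz\in T'\subseteq\phi(R)$, hence $\bsz\in\bbZ^2\cap\phi(R)$, with $\|\bsx'-\bsz\|\le\sqrt{2+\sqrt2}+1/\sqrt2$. Taking the supremum over $\bsx'$ and pulling back through $\phi$ gives $h(P_R;R)\le(1/\sqrt{2N})\bigl(\sqrt{2+\sqrt2}+1/\sqrt2\bigr)$, and combined with the separation bound this yields $\rho(P_R;R)\le 2\sqrt{2+\sqrt2}+\sqrt2$, comfortably below the claimed constant.

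The main obstacle is precisely the \emph{in-triangle} requirement on the approximating lattice point: the nearest point of $\Lambda$ to a near-boundary query point may fall outside $R$, which is exactly what forces the sub-triangle fitting and the use of Lemma~\ref{lem:T_contains_integer} rather than simply quoting the covering radius $1/(2\sqrt N)$ of $\Lambda$. The fitting is clean only when $\sqrt{2N}\ge\sqrt2+1$, i.e.\ $N\ge 3$; the single remaining case $N=2$ (a spacing-$1/2$ lattice on a unit-leg triangle) has to be disposed of by a direct estimate of the finitely many lattice points in $R$. The additive slack $2\sqrt2+2=2(\sqrt2+1)$ in the stated constant—twice the leg length of the auxiliary triangle—is exactly the cushion that absorbs this boundary and small-$N$ handling into a single bound valid for all $N\ge2$ and all $\alpha$.
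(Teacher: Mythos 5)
Your proposal follows the same overall route as the paper: reduce to the reference triangle $R$ via the affine map (which is where the factor $\kappa(M)=\|M\|_{\mathrm{op}}\|M^{-1}\|_{\mathrm{op}}$ enters), bound the separation radius by the lattice spacing $1/\sqrt{2N}$, and bound the covering radius by producing an \emph{in-triangle} integer lattice point via Lemma~\ref{lem:T_contains_integer} after rescaling by $\sqrt{2N}\,R_\alpha^{-1}$. Where you genuinely differ is the covering step: the paper tiles the maximal sub-triangle $(\sqrt2+1)u\,R_\alpha^{-1}R$ into $u^2$ congruent leg-$(\sqrt2+1)$ copies and then pays an extra additive $v<\sqrt2+1$ for query points in the leftover boundary layer, whereas you place a \emph{single} same-orientation leg-$(\sqrt2+1)$ triangle containing the query point and contained in the big triangle. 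Your placement claim is correct (for the axis-aligned model with legs $L\ge\ell$, a point $(x,y)$ is covered by the sub-triangle anchored at $(a,b)$ with $a+b=\max\{0,x+y-\ell\}$, $a\le x$, $b\le y$, and this sub-triangle stays inside since $a+b\le L-\ell$), and it buys you the sharper constant $2\sqrt{2+\sqrt2}+\sqrt2$ in place of the paper's $2\sqrt{2+\sqrt2}+3\sqrt2+2$, so your bound implies the stated one. One loose end you correctly flag is $N=2$, where $\sqrt{2N}=2<\sqrt2+1$ and the single-copy placement is unavailable; note that the paper's own proof has the identical gap (its assertions $\sqrt{2N}>\sqrt2+1$ and $u=\lfloor\sqrt{2N}/(\sqrt2+1)\rfloor\ge1$ both fail at $N=2$), so this is a shared defect rather than a flaw of your argument, but if you want a complete proof for all $N\ge2$ you do need to carry out the direct check for $N=2$ rather than just asserting that the slack in the constant absorbs it.
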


\begin{proof}
Let $\Lambda_n = \{-n,\dots,n\}^2$ with $n = \lfloor \sqrt{2N} \rfloor + 1$, and let 
\[
P' = \{ S_N R_\alpha \bsk \mid \bsk \in \Lambda_n \}
\] 
be the rotated and scaled lattice, where $R_\alpha$ is the rotation matrix and $S_N = 1/\sqrt{2N}$ is the scaling factor.  
Let $R$ be the reference isosceles right triangle $\triangle ((0,0), (0,1), (1,0))$, and let $P_R = P' \cap R$ be the subset lying in $R$. 
Define the affine map $F: R \to T$ by
\[
F(x_1,x_2) := A + x_1 (C-A) + x_2 (B-A),
\] 
and let $P = F(P_R)$, which is exactly our point set. Furthermore, let $M$ denote the linear part of $F$.

First, consider the separation radius. For distinct $\bsx, \bsy \in P_R$, we have $\bsx - \bsy = S_N R_\alpha (\bsk - \bsell)$ for distinct $\bsk,\bsell \in \Lambda$. Since $R_\alpha$ preserves the Euclidean norm and $\|\bsk - \bsell\| \ge 1$, it follows that
$\|\bsx - \bsy\| \ge S_N =  1/\sqrt{2N}$.
Applying the affine map $F$, we get
\[
\|F(\bsx)-F(\bsy)\| = \|M(\bsx-\bsy)\| \ge \frac{\|\bsx-\bsy\|}{\|M^{-1}\|_{\mathrm{op}}},
\] 
where $\|M^{-1}\|_{\mathrm{op}}$ is the operator $2$-norm (or the spectral norm) of $M^{-1}$, depending only on the geometry of the triangle $T=\triangle ABC$.
Hence, the separation radius of $P$ satisfies
\[
q(P; T) \ge \frac{1}{2\|M^{-1}\|_{\mathrm{op}}\, \sqrt{2N}}.
\]

Next, we consider the covering radius. 
For any fixed $\bsy \in T$, let $\bsx = F^{-1}(\bsy)\in R$. Then it follows from the definition of $h(P_R;R)$ that there exists at least one $\bsz\in P_R$ such that $\|\bsx-\bsz\|\le h(P_R;R)$. 
By considering the corresponding point $F(\bsz)\in T$, we have
\[ \|\bsy-F(\bsz)\| = \|M(\bsx-\bsz)\|\le \|M\|_{\mathrm{op}}\|\bsx-\bsz\|\le \|M\|_{\mathrm{op}}\, h(P_R;R). \]
Thus, it suffices to prove an upper bound on $h(P_R;R)$. We have
\begin{align*}
    h(P_R;R) & = \max_{\bsx\in R} \min_{\bsy\in P_R}\|\bsx-\bsy\| \\
    & = \max_{\bsx\in (S_NR_{\alpha})^{-1}R}\min_{\bsy\in (S_NR_{\alpha})^{-1}P_R}\|S_{N}R_{\alpha}(\bsx-\bsy)\|\\
    & = \frac{1}{\sqrt{2N}}\max_{\bsx\in (S_NR_{\alpha})^{-1}R}\min_{\bsy\in \bbZ^2\cap (S_NR_{\alpha})^{-1}R}\|\bsx-\bsy\|,
\end{align*}
where $(S_NR_{\alpha})^{-1}R := \{ (S_NR_{\alpha})^{-1}\bsx\mid \bsx\in R\}$ is a rotated and enlarged isosceles right triangle with leg length $\sqrt{2N}>\sqrt{2}+1$. 

We now define
\[ u = \left\lfloor \frac{\sqrt{2N}}{\sqrt{2}+1}\right\rfloor\ge 1 \quad \text{and}\quad  v = \sqrt{2N}-(\sqrt{2}+1)u. \]
Then we have $0\le v< \sqrt{2}+1$. Let $(\sqrt{2}+1)u\, R_{\alpha}^{-1}R$ be the rotated and enlarged isosceles right triangle with leg length $(\sqrt{2}+1)u$. Since $(\sqrt{2}+1)u\le \sqrt{2N}$, it follows that $(\sqrt{2}+1)u\, R_{\alpha}^{-1}R\subseteq (S_NR_{\alpha})^{-1}R$, and $(\sqrt{2}+1)u\, R_{\alpha}^{-1}R$ can be expressed as the union of $\sum_{k=1}^{u}(2k-1)=u^2$ sub-triangles, all of which are isosceles right triangles with leg length $\sqrt{2}+1$. (See Figure~\ref{fig:u_v_discussion} for an illustration of this geometric relationship.) 

\begin{figure}[tbp]
    \centering
    \begin{tikzpicture}[scale=0.55, >=stealth]
        \def\u{5}
        \def\step{1.5}
        \def\v{0.8}

        \coordinate (O) at (0,0);
        \coordinate (X) at (\u*\step + \v, 0);
        \coordinate (Y) at (0, \u*\step + \v);

        \coordinate (Xi) at (\u*\step, 0);
        \coordinate (Yi) at (0, \u*\step);

        \draw[very thick] (O) -- (X) -- (Y) -- cycle;

        \draw[thick, gray] (O) -- (Xi) -- (Yi) -- cycle;

        \foreach \i in {1,...,\u} {
            \draw[dashed, gray] (\i*\step, 0) -- (\i*\step, {\u*\step - \i*\step});
            \draw[dashed, gray] (0, \i*\step) -- ({\u*\step - \i*\step}, \i*\step);
            \draw[dashed, gray] (0, \i*\step) -- (\i*\step, 0);
        }

        \draw[<->] (0, -0.3) -- (\u*\step, -0.3) node[midway, below] {\small $(\sqrt{2}+1)u$};
        \draw[<->] (\u*\step, -0.3) -- (\u*\step + \v, -0.3) node[midway, below] {\small $v$};

        \node[black, align=center] at (\u*\step*0.37, \u*\step*0.25) {$u^2$ sub-triangles};
        
    \end{tikzpicture}
    \caption{Geometric relationship between $u$ and $v$ in the proof of Theorem~\ref{thm:Kronecker}. The large outer right triangle represents the domain with leg length $\sqrt{2N}=(\sqrt{2}+1)u+v$, which is partitioned into $u^2$ smaller right triangles and the remaining region.}
    \label{fig:u_v_discussion}
\end{figure}

It then follows from Lemma~\ref{lem:T_contains_integer} that, if $\bsx\in (\sqrt{2}+1)u\, R_{\alpha}^{-1}R$, there exists at least one integer lattice point in $(\sqrt{2}+1)u\, R_{\alpha}^{-1}R$ within a distance of $\sqrt{2+\sqrt{2}}+1/\sqrt{2}$. Otherwise, if $\bsx\in (S_NR_{\alpha})^{-1}R\setminus (\sqrt{2}+1)u\, R_{\alpha}^{-1}R$, there exists a point $\bsy\in (\sqrt{2}+1)u\, R_{\alpha}^{-1}R$ within a distance of $v$, from which there exists at least one integer lattice point in $(\sqrt{2}+1)u\, R_{\alpha}^{-1}R$ within a distance of $\sqrt{2+\sqrt{2}}+1/\sqrt{2}$. 
This implies that
\[ h(P_R;R) \le \frac{1}{\sqrt{2N}}\left( \sqrt{2+\sqrt{2}}+1/\sqrt{2}+v\right)\le \frac{1}{\sqrt{2N}}\left( \sqrt{2+\sqrt{2}}+3/\sqrt{2}+1\right).\]
and hence
\[ h(P;T)\le \frac{\|M\|_{\mathrm{op}}}{\sqrt{2N}}\left( \sqrt{2+\sqrt{2}}+3/\sqrt{2}+1\right).\]

Combining the bounds on the separation and covering radii, we obtain
\begin{align*}
    \rho(P;T) & \leq \left( 2\sqrt{2+\sqrt{2}}+3\sqrt{2}+2\right) \|M\|_{\mathrm{op}}\|M^{-1}\|_{\mathrm{op}}\\
    & = \left( 2\sqrt{2+\sqrt{2}}+3\sqrt{2}+2\right) \kappa(M).
\end{align*}
Since this bound is independent of $N$ and $\alpha$, we complete the proof.
\end{proof}

\begin{remark}
    Although triangular Kronecker lattices are not, in general, extensible, they are quasi-uniform as a sequence of point sets with increasing sizes (cf.~Definition~\ref{def:quasi-uniform}). Again, in contrast to \cite{BO15}, no specific choice of the rotation angle~$\alpha$ is required to ensure quasi-uniformity. However, since the condition number satisfies $\kappa(M)\ge 1$ for any affine transformation matrix~$M$, our upper bound on the mesh ratio is always greater than the optimal constant, which is $2$. Whether this bound can be improved---possibly by selecting a suitable $\alpha$---remains an open question for future research. 
\end{remark}

\section{Numerical experiments}\label{sec:experiments}

To evaluate the practical performance of various point sets, we conduct a series of numerical experiments.
First, we examine the geometric sensitivity of the proposed VG algorithm with respect to the shape of the underlying triangle.
Next, we analyze and compare the behavior of the mesh ratio for different point sets as the number of points increases.
Finally, we assess their performance in a radial basis function (RBF) interpolation task.

\subsection{Geometric sensitivity of the VG algorithm}

We study how the shape of a triangle affects the mesh ratio. To eliminate scale effects, the triangle shape is quantified using the isoperimetric quotient $J = 12\sqrt{3}\,A/L^2$, where $A$ and $L$ denote the area and perimeter, respectively. This dimensionless, similarity-invariant index is normalized such that $J = 1$ for an equilateral triangle. Triangles are generated via a parameter sweep over the angles $(\alpha,\beta)$, with $\gamma = \pi - \alpha - \beta > 0$, and side lengths set as $a:b:c = \sin\alpha:\sin\beta:\sin\gamma$ under the constraint $L = 1$. Coordinates are fixed by placing side $c$ along the $x$-axis. For each triangle $T$, the point configuration is generated by the VG algorithm.

\begin{figure}[t]
   \centering
    \includegraphics[width=\linewidth]{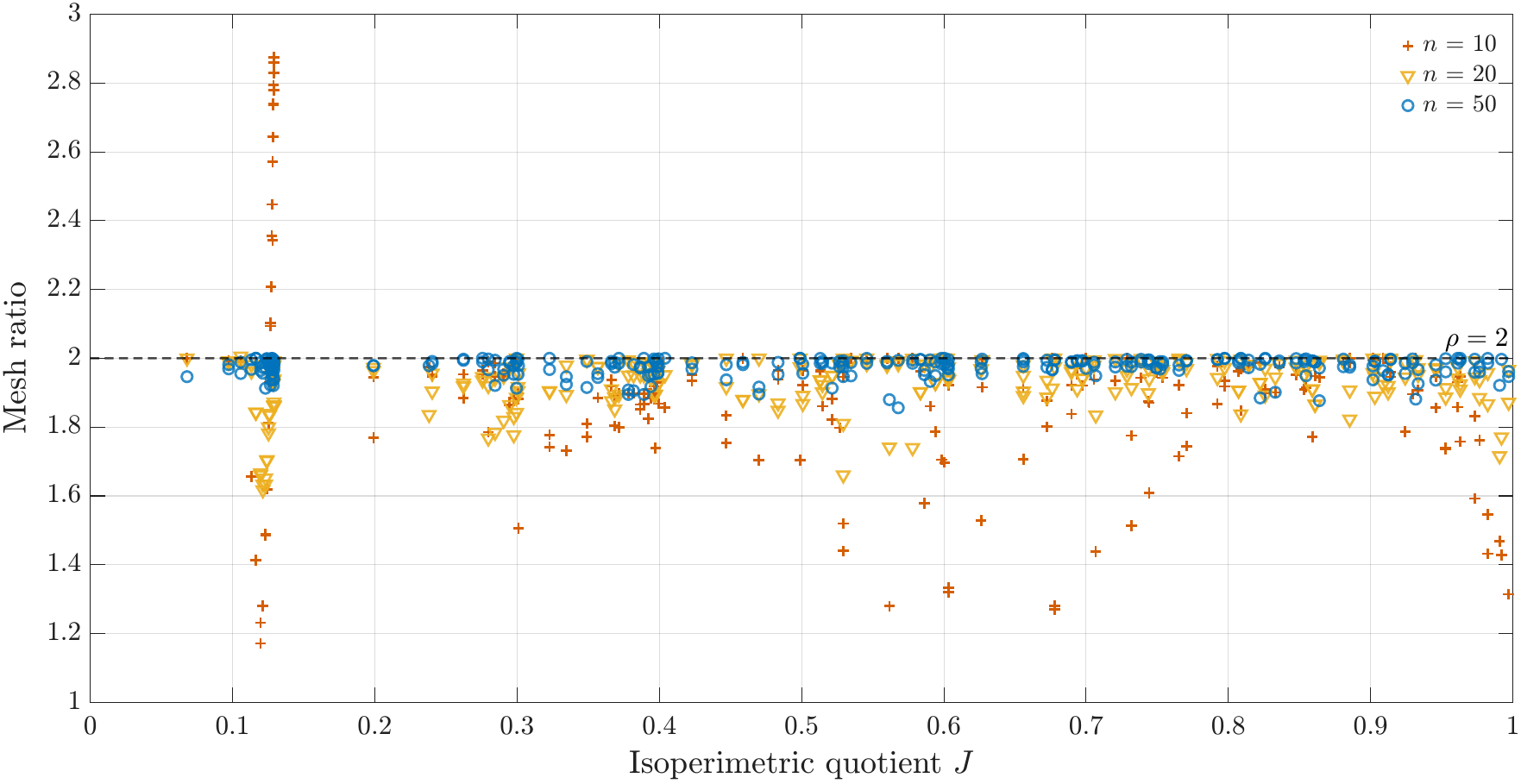}
    \caption{Mesh ratio of point sets generated by the VG algorithm for various isoperimetric quotients $J$, shown for $n=10$ (red), $n=20$ (orange), and $n=50$ (blue).}
    \label{fig:4-1}
\end{figure}

Figure~\ref{fig:4-1} shows a scatter plot of the mesh ratio $\rho$ versus $J$ for $n \in \{10, 20, 50\}$. Most configurations lie near the optimal bound of $\rho \approx 2$. When $n=10$, near-equilateral triangles ($J \approx 1$) achieve the lowest values, with $\rho$ approaching the lower limit $2/\sqrt{3}$ in some cases. As the shape becomes more degenerate ($J \ll 1$), many triangles produce significantly larger $\rho$, reflecting that the separation radius $q$ is pinned by the shortest side ($q = c/2$) while the covering radius $h$ remains large. In fact, extremely skinny triangles occasionally yield outliers with even larger $\rho$. Increasing $n$ primarily reduces $h$ (improving coverage) and hence lowers $\rho$, but the reduction is modest when $q$ is constrained by a short edge, highlighting the strong geometric sensitivity. Nevertheless, as the number of points increases, the mesh ratio for even nearly degenerate, skinny triangles tends to converge back toward the baseline of $\rho = 2$, corroborating Theorem~\ref{thm:main}.

\begin{figure}[t]
   \centering
    \includegraphics[width=\linewidth]{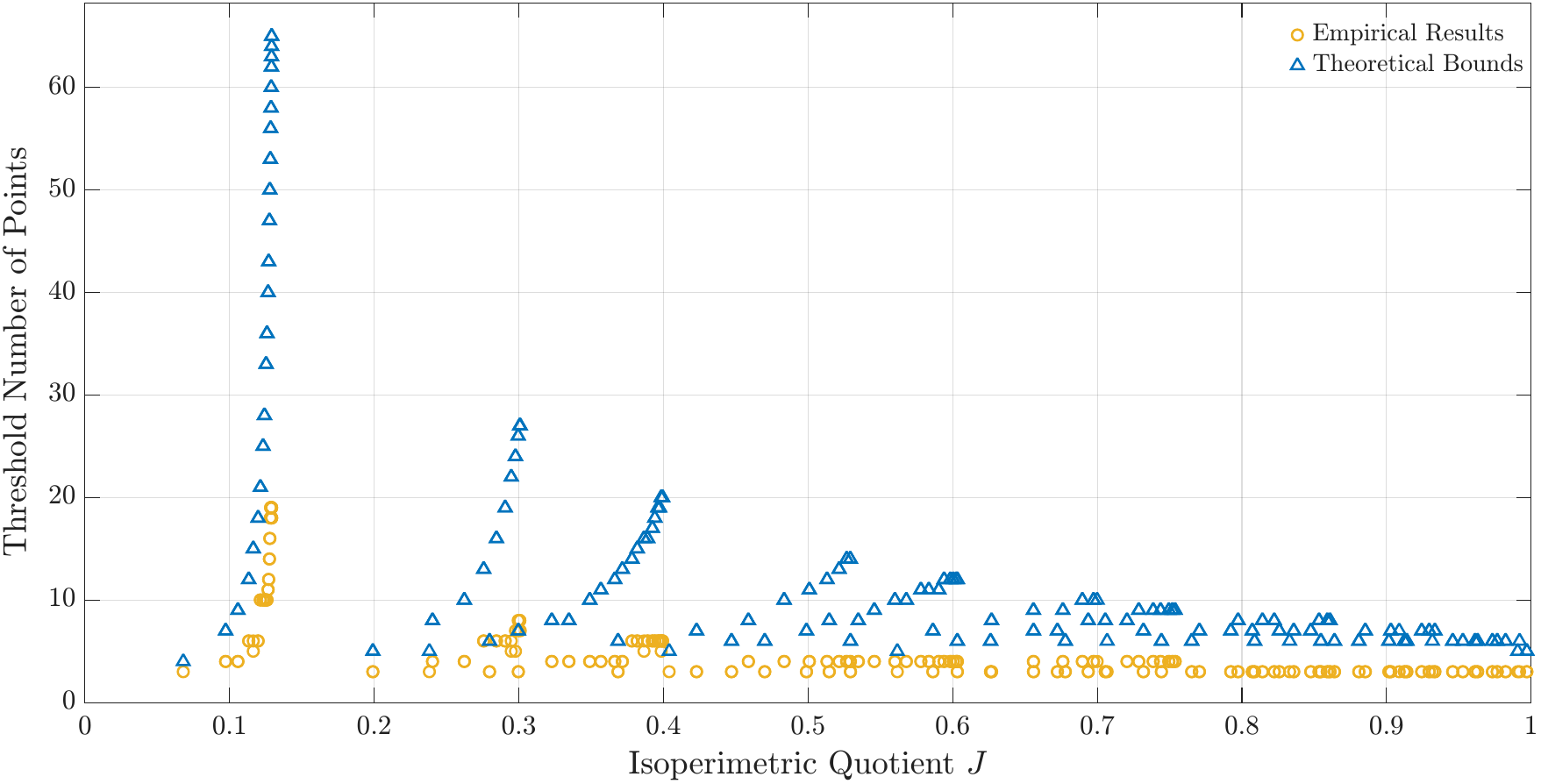}
    \caption{Number of points required for the VG algorithm to achieve the optimal mesh ratio of $2$, comparing empirical results (orange) with theoretical bounds (blue).}
    \label{fig:4-2}
\end{figure}

We know from Theorem~\ref{thm:main} that the number of points required for the VG algorithm to achieve the optimal mesh ratio of $2$ is at most
\[ \left\lfloor \frac{A_T+L_T\, q(P_3;T)+\pi\, q(P_3;T)^2}{\pi\, q(P_3;T)^2}\right\rfloor + 1. \]
We compare this theoretical bound with the empirical result. The empirical result is defined as the first number of points $n$ for which the mesh ratio of the point set generated by the VG algorithm is less than or equal to $2$. This comparison is shown in Figure~\ref{fig:4-2}.
As illustrated in the figure, the results vary significantly with the isoperimetric quotient $J$.
For well-shaped triangles (e.g., $J \ge 0.8$), the initial three-vertex configuration already satisfies the optimal mesh ratio.
For intermediate values ($J \approx 0.55$), the theoretical bound suggests a significantly larger number of points, whereas the empirical result remains small, typically just $3$ or $4$.
For poorly-shaped triangles (i.e., as $J$ decreases further), both the theoretical bound and the empirical result increase sharply, confirming that poorer geometry requires significantly more points to achieve the optimal mesh ratio.
Overall, this experiment supports that $J$ is strongly correlated with the number of points (and thus the computational effort) required for the VG algorithm to reach the optimal mesh ratio when initialized with the three-vertex configuration.
Furthermore, the theoretical bound is consistently larger than the empirical result, indicating that the bound is not necessarily tight. Improving this bound remains open for future work.

\subsection{Comparison of mesh ratio}

Here, six different point sets are compared: our VG algorithm (Algorithm~\ref{alg:VG}), the triangular van der Corput sequence (Algorithm~\ref{alg:vdC}), the triangular Kronecker lattice (Algorithm~\ref{alg:Kronecker} with $\alpha=3\pi/8$), the barycentric grid, and two random point sets. Note that the angle $\alpha=3\pi/8$ for the Kronecker lattice is chosen to maintain consistency with the numerical experiments presented in \cite{BO15}. 

Our barycentric grid point set is generated using a hybrid approach. First, we construct a uniform barycentric lattice with $m$ divisions per side, where $m$ is chosen to create the largest possible complete lattice containing no more than $n$ points. If the lattice size is smaller than $n$, we then apply a farthest point insertion algorithm to iteratively add the remaining points until the total set size reaches exactly $n$.

To provide random baselines, we introduce two types of random point sets. The first is a classical independent and identically distributed (i.i.d.) uniform random set. The second is a Poisson-disk–like set (PD) \cite{DH06}, produced via a sequential inhibition process to achieve a blue-noise distribution that avoids point clustering, which is often used in computer graphics. For the mesh ratio analysis, results for these random sets are averaged over 100 independent trials for each sample size.

\begin{figure}[t]
   \centering
    \includegraphics[width=\linewidth]{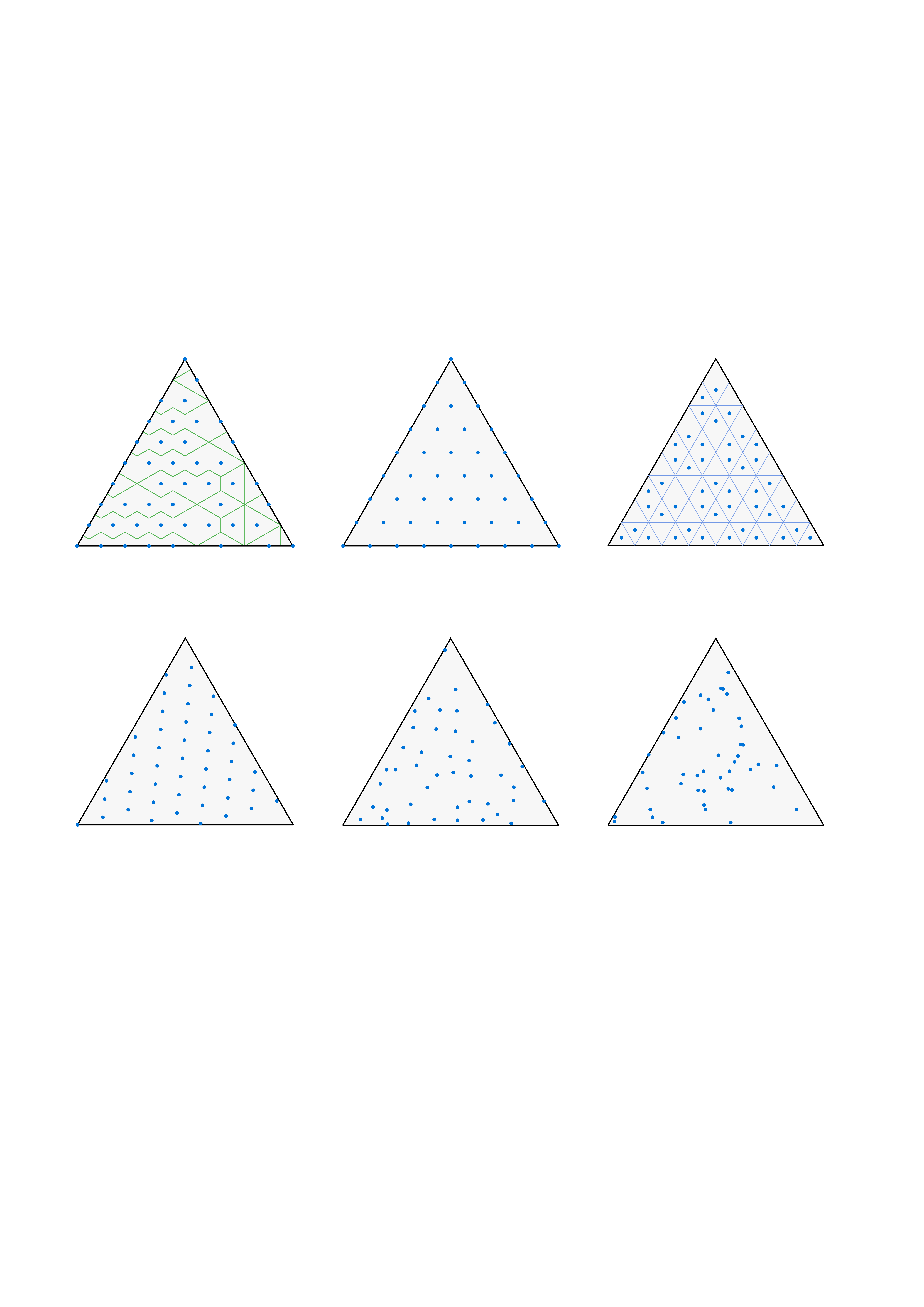}
    \caption{Six point sets with $n=45$: our VG algorithm (left top), barycentric grid (middle top), triangular van der Corput sequence (right top), triangular Kronecker lattice (left bottom), PD random (middle bottom), and i.i.d.\ random (right bottom).}
    \label{fig:4-3}
\end{figure}

We first conduct experiments on the unit equilateral triangle, defined with vertices at $(0,0)$, $(1,0)$, and $(1/2, \sqrt{3}/2)$. 
Figure~\ref{fig:4-3} provides a visual comparison of the point sets generated by all six methods for $n=45$ on this domain. 

We then compute the mesh ratio $\rho(P_n; T)$ incrementally for the first 210 points, starting from $n=3$. The results for all six methods are plotted in Figure~\ref{fig:4-31}. Across the entire range of $n$, the VG algorithm and the barycentric grid consistently outperform the other methods, with their mesh ratio curves remaining below the optimal bound of $\rho \le 2$. Moreover, when $n$ is a triangular number (i.e., the set forms a complete lattice), the grid achieves the theoretical minimum mesh ratio of $2/\sqrt{3}$ (for an equilateral triangulation). Notably, the VG algorithm also reaches this lower bound for several $n$. This indicates that the VG algorithm effectively suppresses local voids and maintains a near-optimal quasi-uniformity, demonstrating its robustness for all values of $n$.

\begin{figure}[t]
   \centering
    \includegraphics[width=\linewidth]{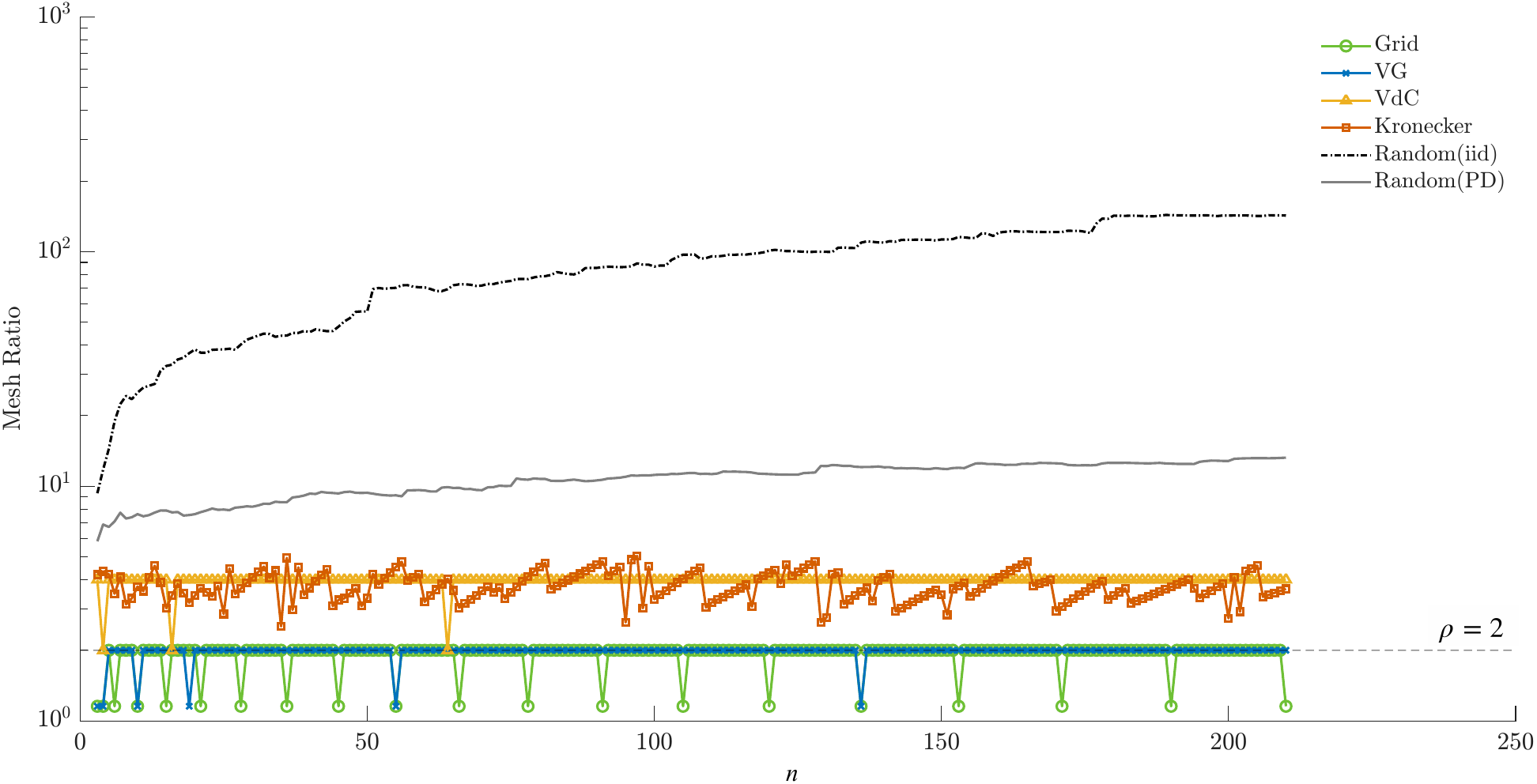}
    \caption{Mesh ratio of six point sets in the unit equilateral triangle: our VG algorithm (blue), barycentric grid (green), triangular van der Corput sequence (orange), triangular Kronecker lattice (red), PD random (black dash), and i.i.d.\ random (black).}
    \label{fig:4-31}
\end{figure}

The Kronecker lattice performs moderately but exhibits significant sawtooth-like fluctuations. This likely reflects the transient formation of local gaps as the rotated lattice points are clipped to the triangular domain. The van der Corput sequence is stable, but its mesh ratio is consistently high. This clearly illustrates that low-discrepancy properties do not automatically guarantee a low mesh ratio. Finally, the two random baselines perform the worst. The mesh ratio for the i.i.d.\ set appears to increase monotonically with $n$. The PD set, which enforces a minimum point distance, performs considerably better than i.i.d.\ but is still substantially worse than the other deterministic, constructive sets.

\begin{figure}[t]
   \centering
    \includegraphics[width=\linewidth]{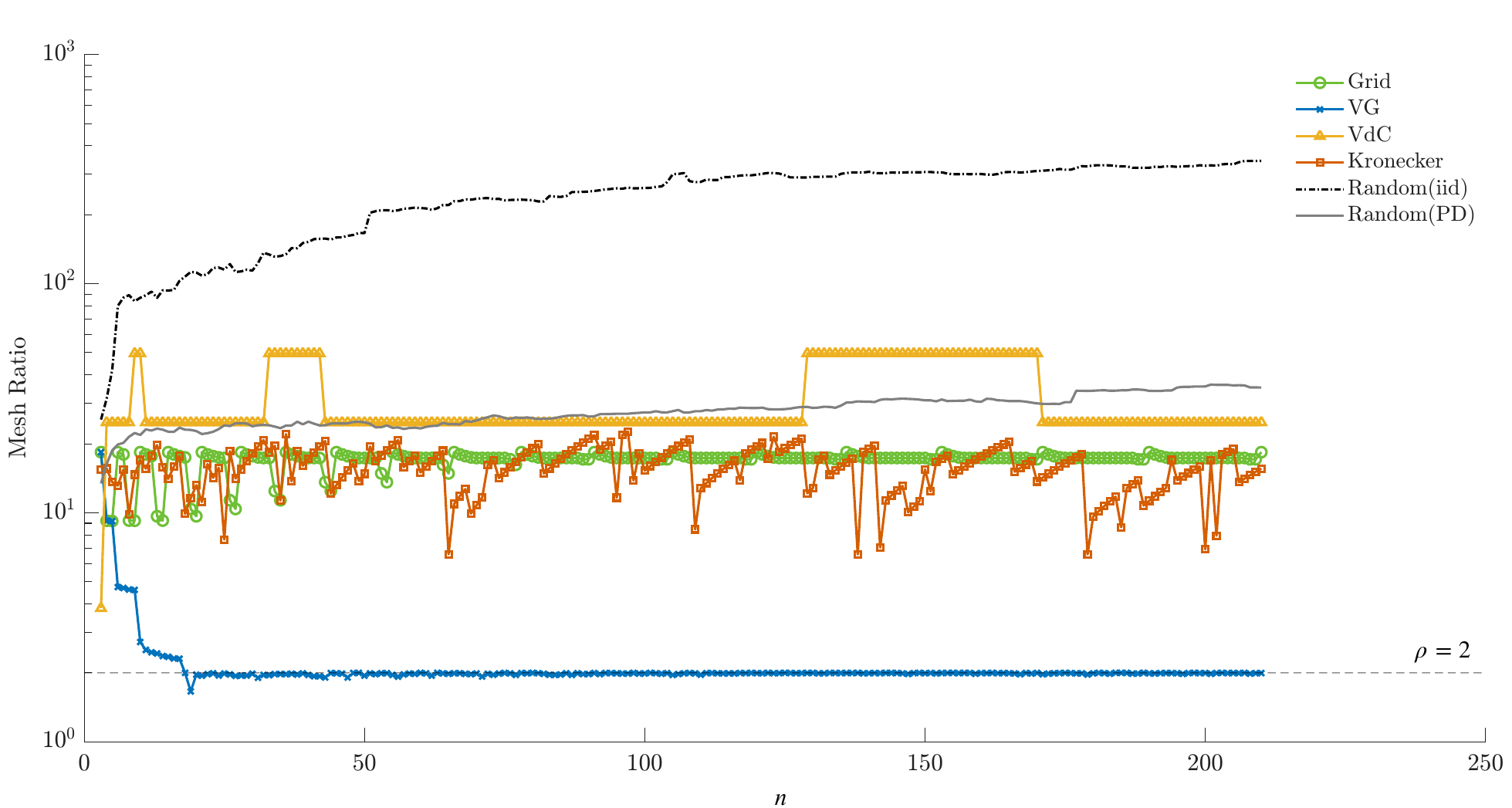}
    \caption{Mesh ratio of six point sets in a skinny triangle with vertices $(0,0), (1,0), (0.028,0.045)$: our VG algorithm (blue), barycentric grid (green), triangular van der Corput sequence (orange), triangular Kronecker lattice (red), PD random (black dash), and i.i.d.\ random (black).}
    \label{fig:4-32}
\end{figure}

To test the robustness of the different constructions under severe geometric degeneracy, we next conduct the same mesh ratio comparison on a skinny triangle, defined with vertices at $(0,0), (1,0),$ and $(0.028, 0.045)$. This triangle with its isoperimetric quotient $J=12\sqrt{3}\,A/L^2 \approx 0.114$ has one sharp angle ($\approx 2.6^\circ$) and a large obtuse angle ($\approx 119.3^\circ$), representing a strongly degenerate, asymmetric case. The results for all six methods are plotted in Figure~\ref{fig:4-32}. In this challenging domain, the VG algorithm demonstrates clear superiority, consistently outperforming all other methods. After an initial period, its mesh ratio successfully converges to and remains at the optimal bound ($\rho \le 2$). The initial, monotonic decrease of the mesh ratio (until it reaches $2$) is consistent with the theoretical prediction from the proof of the third item of Lemma~\ref{lem:dynamics}. In contrast, the mesh ratios for the barycentric grid and the van der Corput sequence are stable but at magnitudes significantly higher than the VG algorithm. The Kronecker lattice performs comparably to the barycentric grid but, as before, exhibits significant sawtooth-like fluctuations. The i.i.d.\ random baseline performs the worst, with its mesh ratio increasing almost monotonically with $n$. The PD random set performs comparably to the van der Corput sequence. However, its mesh ratio increases gradually, suggesting it will perform worse than the deterministic, constructive sets for larger $n$. This is consistent with the fact that the PD construction mainly improves the separation radius by preventing clustering, but does not explicitly minimize the covering radius; consequently, relatively large local gaps may remain, leading to a larger mesh ratio.

\subsection{RBF interpolation performance}

Finally, we study how different point sets affect the accuracy and stability of 2-D RBF interpolation on a triangular domain. Both the domain and the test functions are first normalized. All point sets are generated on the unit equilateral triangle embedded in $[0,1]^2$ with corners at $(0,0)$, $(1,0)$, and $(1/2, \sqrt{3}/2)$. 
For this experiment, the grid point set is constructed using the barycentric grid corresponding to a triangular number. 
It serves as a mesh-based baseline, achieving the smallest mesh ratio of $2/\sqrt{3}$ when $n$ is a triangular number, as shown in Figure~\ref{fig:4-31}, and is expected to achieve near-optimal asymptotic rates when the kernel supports are appropriately chosen. It is important to note, however, that this desirable property of the grid does not hold for ill-conditioned triangles, see Figure~\ref{fig:4-32}.

The test functions and their targeted properties are listed in Table~\ref{tab:test_functions}. Using the six point sets described in the previous subsection, we perform RBF interpolation. Given nodes $\{\bsx_i\}_{i=1}^n$, a radial kernel $\phi$, and a target function $f$, the interpolant is defined as
\[
S(\bsx)=\sum_{i=1}^{n} w_i\,\phi\left(\|\bsx-\bsx_i\|\right),
\]
where the weights $\{w_i\}$ are determined by the usual linear system with entries $\phi\left(\|\bsx_i-\bsx_j\|\right)$. We consider the following three kernels:
\begin{itemize}
\item Gaussian kernel: 
\[ \phi(r) = e^{-(r/\ell)^2} \]
\item Mat\'{e}rn--5/2 kernel: 
\[ \phi(r) = \left(1 + \sqrt{5}\frac{r}{\ell} + \frac{5}{3}\left(\frac{r}{\ell}\right)^2 \right) e^{-\sqrt{5}\, r / \ell} \] 
\item Wendland--$C^2$ kernel: 
\[ \phi(r) = \left(1-\frac{r}{\ell}\right)_+^4 \left(4\,\frac{r}{\ell}+1\right), \]
where $(x)_+ := \max(x,0)$ for $x \in \mathbb{R}$.
\end{itemize}
The kernel parameter $\ell$, common to all three kernels, is set as $\ell = c \sqrt{A/n}$, where $A$ is the area of the normalized triangle, $n$ is the number of nodes, and $c>0$ is a user-controlled coefficient. We assess the accuracy on a dense validation grid within the triangle and report the root-mean-square error, denoted by $E_2$, to evaluate convergence.

\begin{table}[t]
\centering
\caption{Test functions used for the numerical experiments.}
\label{tab:test_functions}
\renewcommand{\arraystretch}{2.0} 
\begin{tabular}{l l p{5cm}}
\hline\hline
\textbf{Identifier} & \textbf{Mathematical Expression} & \textbf{Key Properties} \\
\hline
\texttt{Franke} & Standard Franke's function$^*$  & A globally smooth surface composed of several Gaussian-like peaks and dips. \\
\texttt{Fourier2d} & $f(x,y) = \sin(9\pi x) \cos(9\pi y)$ & A smooth, highly oscillatory function with a regular wave-like pattern. \\
\texttt{ridge} & $f(x,y) = \displaystyle \frac{\arctan\bigl(2(x + 3y - 1)\bigr)}{\arctan\bigl(2(\sqrt{10}+1)\bigr)}$ & Features a steep, diagonally-oriented gradient layer along the line $x+3y=1$ \cite{LF03}. \\
\texttt{runge} & $f(x,y) = \displaystyle \frac{25}{25 + (x - 0.2)^2 + 2y^2}$ & An anisotropic 2D Runge-like function with a sharp, elliptical peak centered at $(0.2, 0)$ \cite{LF03}. \\
\hline\hline
\end{tabular}
\begin{align*}
    ^*:\quad f(x,y) & = \frac{3}{4} e^{-((9x-2)^2+(9y-2)^2)/4} + \frac{3}{4} e^{-((9x+1)^2/49 + (9y+1)/10)}\\
    & \quad + \frac{1}{2} e^{-((9x-7)^2 + (9y-3)^2)/4} - \frac{1}{5} e^{-((9x-4)^2 + (9y-7)^2)}
\end{align*}
\vspace{-1ex}
\hrule
\end{table}

\begin{figure}[t]
   \centering
    \includegraphics[width=\linewidth]{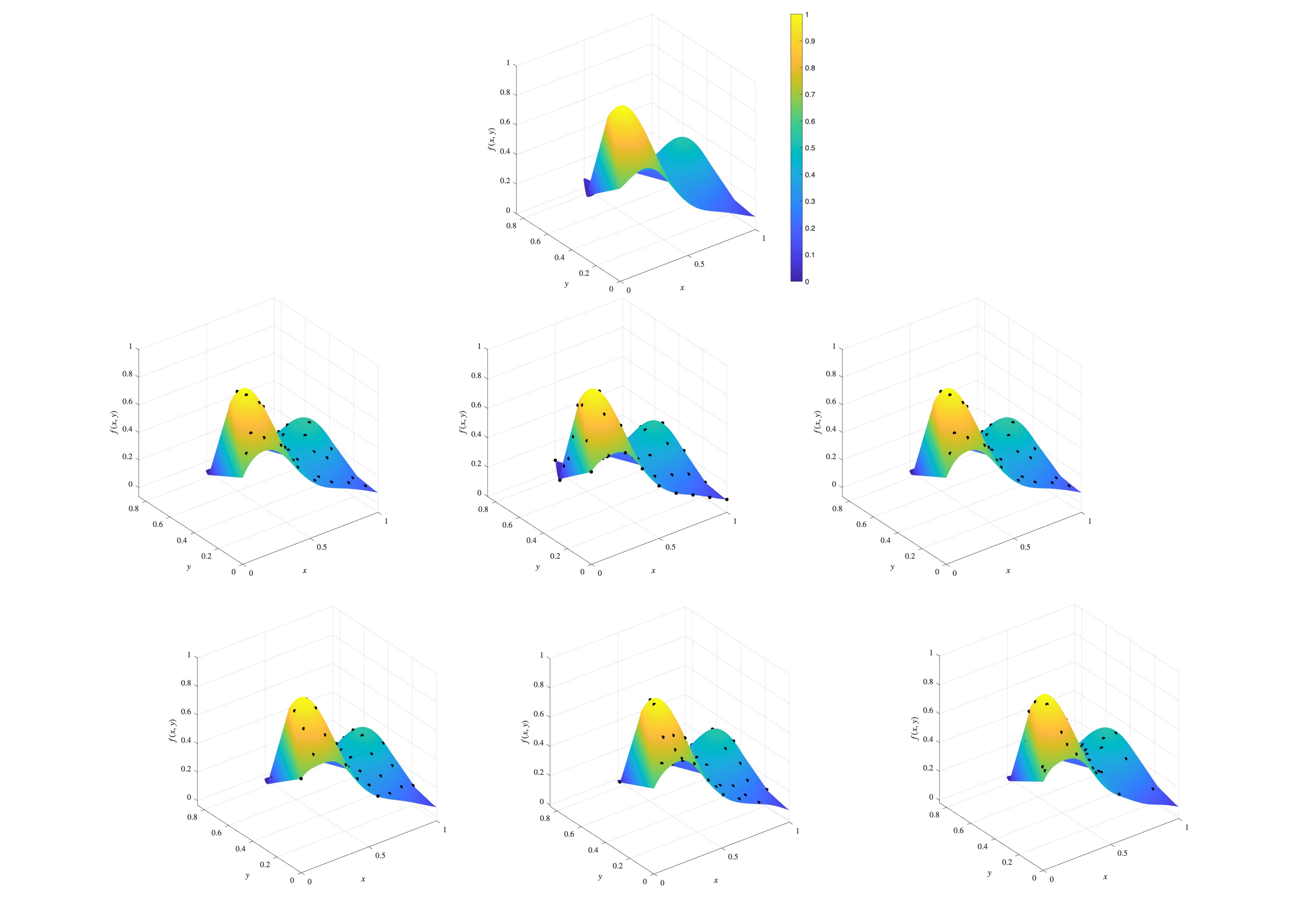}
    \caption{RBF interpolation of the \texttt{Franke} function (Gaussian kernel, $n=45$): true surface (top), our VG algorithm (middle left), barycentric grid (middle center), triangular van der Corput sequence (middle right), triangular Kronecker lattice (bottom left), PD random (bottom center), and i.i.d.\ random (bottom right).}
    \label{fig:4-4}
\end{figure}

\begin{figure}[!]
   \centering
    \includegraphics[width=0.85\textwidth]{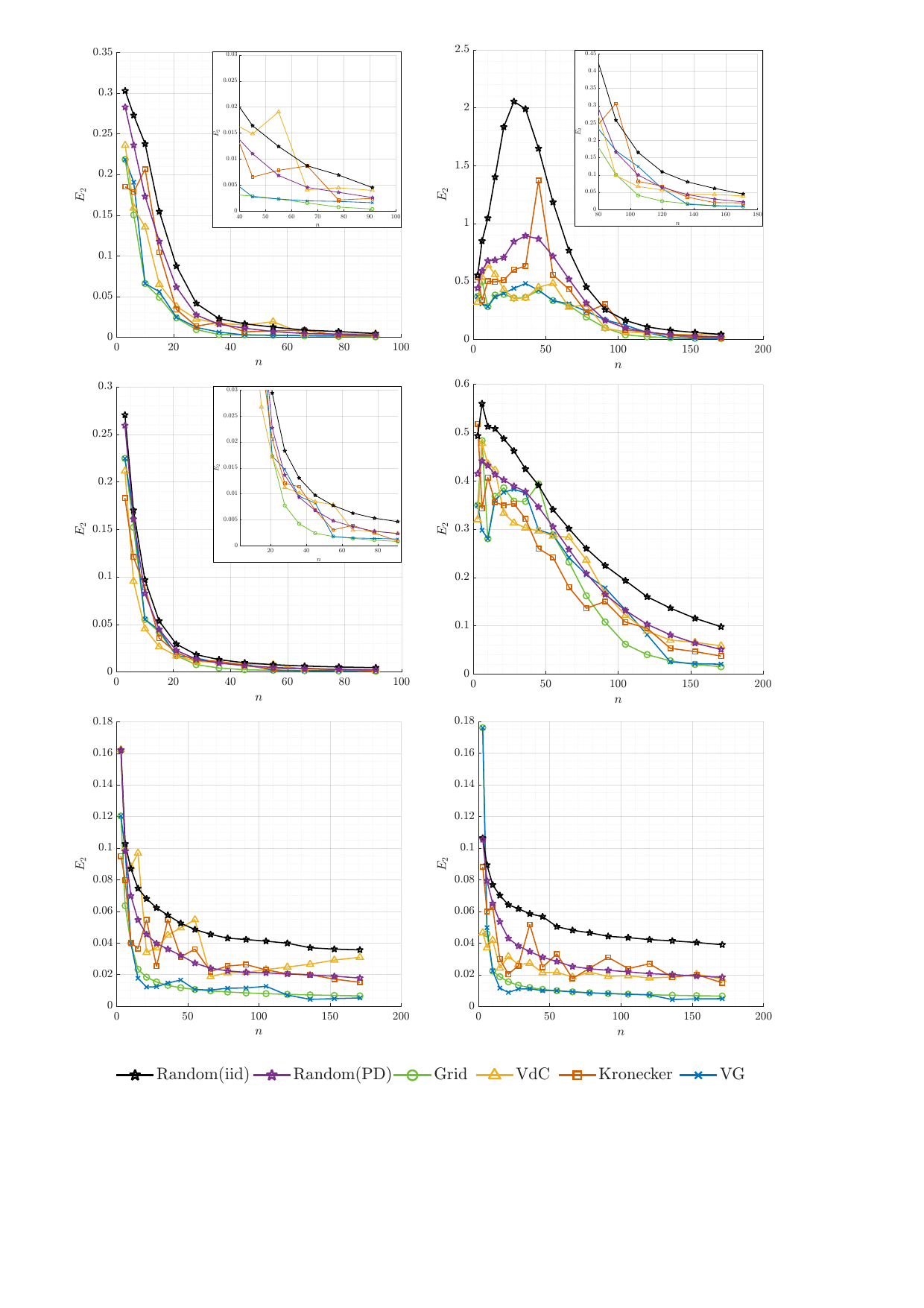}
    \caption{Convergence behavior of the RBF interpolation RMSE ($E_2$) for various test functions and kernels: \texttt{Franke} with Gaussian kernel (top left), \texttt{Fourier2d} with Gaussian kernel (top right), \texttt{Franke} with Mat\'{e}rn--5/2 kernel (middle left), \texttt{Fourier2d} with Mat\'{e}rn--5/2 kernel (middle right), \texttt{ridge} with Wendland-$C^2$ kernel (bottom left), and \texttt{runge} with Wendland-$C^2$ kernel (bottom right). Different colors represent different point sets used. The inset plots provide zoomed-in views of regions with smaller errors to highlight performance differences.}
    \label{fig:4-5}
\end{figure}

Figure~\ref{fig:4-4} visualizes the RBF interpolation of the \texttt{Franke} function using the Gaussian kernel for $n=45$. While all point sets apparently produce reasonable results, a closer inspection reveals that our VG and grid point sets reproduce the true surface most accurately. The other methods, in contrast, slightly fail to capture certain local details. This observation will be quantified with the $E_2$ error.

The convergence behavior of the $E_2$ error (as a function of the number of points $n$) for various combinations of the test function and the kernel is shown in Figure~\ref{fig:4-5}. For these experiments, the user-controlled coefficient $c$ was set as follows: $c=2$ in the top-right and middle-right plots, $c=4$ in the top-left and middle-left plots, and $c=5$ in the bottom-left and -right plots. Obviously, different values of $c$ can lead to different absolute error decays, sometimes producing extremely slow or unstable convergence. However, the relative performance ranking among the six point sets remains largely consistent in most cases.

As shown in the figure, the barycentric grid (the optimal baseline for an equilateral triangle) performs well, as expected. Our VG algorithm is asymptotically comparable to the barycentric grid in all cases, validating the robustness of quasi-uniform point sets with a small mesh ratio. The van der Corput sequence, the Kronecker lattice, and the PD random set perform moderately. They show some exceptions for small $n$, where they perform better than the barycentric grid. However, as $n$ increases, the barycentric grid and our VG algorithm consistently overtake these sets in all test cases. Finally, the i.i.d.\ random baseline performs the worst, exhibiting the slowest convergence.

\bibliographystyle{amsplain}
\bibliography{ref.bib}

\end{document}